\newtheorem{theorem}{Theorem}[section]
\newtheorem{proposition}[theorem]{Proposition}
\newtheorem{lemma}[theorem]{Lemma}
\numberwithin{equation}{section}
\numberwithin{theorem}{section}
\newcommand{\dis}{\displaystyle}
\newcommand{\R}{\mathbb{R}}
\newcommand{\C}{\mathbb{C}}
\newcommand{\Z}{\mathbb{Z}}
\newcommand{\N}{\mathbb{N}}
\newcommand{\parameter}[1]{{\left[#1\right]}}
\newcommand{\roundparameter}[1]{{\left(#1\right)}}
\newcommand{\kk}{{\parameter{k}}}
\newcommand{\dd}{\mathrm{d}}
\newcommand{\dx}[1][x]{\mathrm{d}#1}
\newcommand{\DiffOp}[1][x]{\frac{\dd}{\dx[#1]}}
\newcommand{\DiffOpHigherOrder}[2][x]{\frac{\dd^{#2}}{\dx[#1]^{#2}}}
\newcommand{\DiffOpFunction}[2][x]{\frac{\dd #2}{\dx[#1]}}
\newcommand{\pochhammer}[2][n]{\left(#2\right)_{#1}}
\newcommand{\Hypergeometric}[5][x]
{{}_{#2} F_{#3} \left(\begin{matrix} #4 \\ #5 \end{matrix} \, ; \, #1\right)}
\newcommand{\twoFtwo}[3][x]{\Hypergeometric[#1]{2}{2}{#2}{#3}}
\newcommand{\twoFone}[3][x]{\Hypergeometric[#1]{2}{1}{#2}{#3}}
\newcommand{\threeFtwo}[3][x]{\Hypergeometric[#1]{3}{2}{#2}{#3}}
\newcommand{\HypergeometricOneLine}[5][x]{{}_{#2} F_{#3} \left(#4;#5;#1\right)}
\newcommand{\twoFoneOneLine}[3][x]{\HypergeometricOneLine[#1]{2}{1}{#2}{#3}}
\newcommand{\W}{\mathcal{W}}
\newcommand{\Wvec}{\overline{\W}}
\newcommand{\V}{\mathcal{V}}
\newcommand{\KummerU}[3][x]{\textbf{U}\left(#2,#3;#1\right)}
\newcommand\matrixtwobytwo[4]{\begin{bmatrix} \dis #1 & \dis #2 \vspace*{0,1 cm} \\ 
		\dis #3 & \dis #4 \end{bmatrix}}
\newcommand\twovector[2]
\newcommand{\floor}[1]{\left\lfloor #1 \right\rfloor}
\newcommand{\polyseq}[1][P_n(x)]{\dis\left\{#1\right\}_{n\in\N}}
\newcommand{\KK}{\huge \textbf{K} \normalsize}
\newcommand{\contfrac}[3][j=1]{\mathop{\KK}\limits_{#1}^{\infty}\left(\frac{#2}{#3}\right)}
\newcommand{\n}{\vec{n}}
\newcommand{\e}{\mathrm{e}}
\begin{document}
	
\title{\vspace*{-2cm}Multiple orthogonal polynomials with respect to Gauss' hypergeometric function}
\author{H\'elder Lima\footnote{Address: School of Mathematics, Statistics and Actuarial Sciences, University of Kent, Sibson Building, Parkwood Road, Canterbury, CT2 7FS,  UK \newline 
Email addresses: (H\'elder Lima) has27@kent.ac.uk and (Ana Loureiro) a.loureiro@kent.ac.uk }\  \ and Ana Loureiro$^{*}$}
	%\thanks{corresponding author} %%% to include inside \author{ ,,,, }

\date{\today}
\maketitle\vspace*{-1cm}	

%% STATS: 248 words, 1734 characters
\abstract{
\noindent A new set of multiple orthogonal polynomials of both type I and type II with respect to two  weight functions involving Gauss' hypergeometric function on the interval $(0,1)$ is studied. 
This type of polynomials have direct applications in the investigation of singular values of products of Ginibre matrices, in the analysis of rational solutions to Painlev\'e equations and are connected with branched continued fractions and total positivity problems in combinatorics.  
The pair of orthogonality measures is shown to be a Nikishin system and to satisfy a matrix Pearson-type differential equation.
The focus is on the polynomials whose indexes lie on the step line, for which it is shown that a differentiation on the variable gives a shift on the parameters, therefore satisfying the Hahn's property.
We obtain a Rodrigues-type formula for the type I, while a more detailed characterisation is given for the type II polynomials (aka $2$-orthogonal polynomials) which includes: an explicit expression as a terminating hypergeometric series, a third-order differential equation and a third-order recurrence relation.
The asymptotic behaviour of their recurrence coefficients mimics those of Jacobi-Pi\~neiro polynomials, based on which, their zero asymptotic distribution and a Mehler-Heine asymptotic formula near the origin are given.
Particular choices on the parameters degenerate in some known systems such as special cases of the Jacobi-Pi\~neiro polynomials, Jacobi-type $2$-orthogonal polynomials and components of the cubic decomposition of threefold symmetric Hahn-classical polynomials. 
Equally considered are confluence relations to other known polynomial sets, such as multiple orthogonal polynomials with respect to Tricomi functions.\\}

\noindent\textbf{Keywords:} 
\textit{Multiple orthogonal polynomials, Gauss hypergeometric function, Nikishin system, Rodrigues-type formula, generalised hypergeometric series, 2-orthogonal polynomials, Hahn classical}\\

\noindent \textbf{Mathematics Subject Classification 2000:} Primary: 33C45, 42C05, Secondary: 33C05, 33C20

\color{black}

\section{Introduction and motivation}

The main aim of this paper is to investigate the multiple orthogonal polynomials with respect to two absolutely continuous measures supported on the interval $(0,1)$ and admitting an integral representation via weight functions $\W(x;a,b;c,d)$ and $\W(x;a,b+1;c+1,d)$, where
\begin{align}
\label{hypergeometric weight definition}
\W(x;a,b;c,d)=\frac{\Gamma(c)\Gamma(d)}{\Gamma(a)\Gamma(b)\Gamma(\delta)}\,x^{a-1}(1-x)^{\delta-1}\,\twoFone[1-x]{c-b,d-b}{\delta},
\end{align}
with  
\begin{equation}\label{parameters}
a,b,c,d\in\R^+\ \text{ such that } \ \min\{c,d\}>\max\{a,b\} \ \text{ and } \delta=c+d-a-b>0.
\end{equation}

The weight functions involve Gauss' hypergeometric function, which is defined, for parameters $\alpha,\beta\in\C$ and $\gamma\in\C\backslash\{-n:n\in\N\}$, by
\begin{align}
\label{2F1 definition}
\twoFoneOneLine[z]{\alpha,\beta}{\gamma}=\sum\limits_{n=0}^{\infty}\frac{\pochhammer{\alpha}\pochhammer{\beta}}{\pochhammer{\gamma}}\frac{z^n}{n!}.
\end{align}
where $\pochhammer{z}$ denotes the Pochhammer symbol defined by 
\begin{align*}
\pochhammer[0]{z}=1 
\quad \text{and} \quad 
\pochhammer[n]{z}:=z(z+1)\cdots(z+n-1),
\quad n\in\Z^+.
\end{align*} 

The detailed knowledge of multiple orthogonal polynomials with respect to (generalised) hypergeometric functions has applications in random matrix theory, combinatorics, description of rational solutions to nonlinear differential difference equations, such as Painlev\'e equations, number theory, among other fields. 
For instance, the analysis of singular values of products of Ginibre matrices in \cite{KuijlaarsZhang14,KuijlaarsStiv14} uses multiple orthogonal polynomials associated with weight functions expressed in terms of Meijer G-functions, a class of weights to which the weight \eqref{hypergeometric weight definition} belongs. 
Besides, these polynomials are linked with the branched continued fractions introduced in \cite{AlanSokalEtAlBranchedContinuedFractions} as the generating functions of $m$-Dyck paths, for the purpose of solving total positivity problems involving combinatorially interesting sequences of polynomials.
This connection, which leads to new results on both fields involved, will be further explored in forthcoming work.

The hypergeometric function defined by \eqref{2F1 definition} converges absolutely for {$|z|< 1$}, 
and it is a solution of the hypergeometric differential equation
\begin{align}
\label{hypergeometric differential equation}
z(1-z)F''(z)+(\gamma-(\alpha+\beta+1)z)F'(z)-\alpha\beta F(z)=0.
\end{align}
Recall the identity  
$
	\twoFoneOneLine[1]{\alpha,\beta}{\gamma} = \frac{\Gamma(\gamma) \Gamma(\gamma-\alpha-\beta)}{\Gamma(\gamma-\beta)\Gamma(\gamma-\alpha)}
$, which is valid  for $(\gamma-\alpha-\beta)>0$. When  $(\gamma-\alpha-\beta)<0$, we have 
 $\lim\limits_{x\to1^{-}} (1-x)^{-(\gamma-\alpha-\beta)}{ \twoFoneOneLine[x]{\alpha,\beta}{\gamma}}
 = \frac{\Gamma(\gamma)\Gamma(\alpha+ \beta-\gamma)}{\Gamma(\alpha)\Gamma(\beta)}$. 
This yields 
 $\lim\limits_{x\to 0^+} \W(x;a,b;c,d) =0$.

Observe that $\W(x;a,b;c,d)=\W(x;a,b;d,c)$, which is a straightforward consequence of \eqref{hypergeometric weight definition} and \eqref{2F1 definition}. In addition, the symmetry  $\dis\W(x;a,b;c,d)=\W(x;b,a;c,d)$ also holds, because using \cite[Eq.~15.8.1]{DLMF} we have 
\begin{align*}
\twoFone[1-x]{c-b,d-b}{\delta}=x^{b-a}\twoFone[1-x]{d-a,c-a}{\delta}.
\end{align*}
Under the assumptions \eqref{parameters}, we have 
(see \cite[Eq.~2.21.1.11]{PrudnikovEtAlVol3} or \cite[Eq.~7.512.4]{GradshteynRyzhik}) 
\begin{equation}
\int_{0}^{1}x^{a+n-1}(1-x)^{\delta-1}\,\twoFone[1-x]{c-b,d-b}{\delta}\dx
=\frac{\Gamma(a+n)\Gamma(b+n)\Gamma(\delta)}{\Gamma(c+n)\Gamma(d+n)}.
\end{equation}
Therefore, $\dis\W(x;a,b;c,d)$ is a probability density function on the interval $(0,1)$ with moments 
\begin{align}
\label{moments of the hypergeometric weight}
\int_{0}^{1}x^n\W(x;a,b;c,d)\dx
=\frac{\pochhammer{a}\pochhammer{b}}{\pochhammer{c}\pochhammer{d}}
\quad \text{for} \quad n\in\N. 
\end{align}
%because
%\begin{align*}
%x^n\W(x;a,b;c,d)=\frac{\pochhammer{a}\pochhammer{b}}{\pochhammer{c}\pochhammer{d}}\,\W(x;a+n,b+n;c+n,d+n),
%\end{align*} 

Throughout the text, $\N=\Z_0^+=\{0,1,2,\cdots\}$.
When referring to $\polyseq$ as a polynomial sequence it is assumed that $P_n$ is a polynomial of a single variable with degree exactly $n$.
We consistently deal with monic polynomials, unless stated otherwise.

{\it Multiple orthogonal polynomials} are a generalisation of (standard) orthogonal polynomials.
We give a brief introduction to this topic here, further information can be found for instance in \cite[Ch.~23]{IsmailBook} and \cite{GuillermoSurvey}.

The orthogonality conditions of multiple orthogonal polynomials are spread across a vector of $r\in\Z^+$ measures and they are polynomials on a single variable depending on a multi-index $\n=(n_0,\cdots,n_{r-1})\in\N^r$ of length $|\n|=n_0+\cdots+n_{r-1}$.
There are two types of multiple orthogonal polynomials with respect to a system of $r$ measures $(\mu_0,\cdots,\mu_{r-1})$. 
When the number of measures is $r=1$, both types of multiple orthogonality reduce to standard orthogonality.
A polynomial sequence $\polyseq$ is orthogonal with respect to a measure $\mu$ if
\begin{align}
\label{standard orthogonality conditions}
\int x^kP_n(x)\dd\mu(x)
=\begin{cases}
0, &\text{ if } 0\leq k\leq n-1,\\
N_n\neq 0, &\text{ if } n=k.
\end{cases}
\end{align}

We focus on the case of $r=2$ measures but the definitions presented here are easily generalised for $r\geq 2$.

The \textit{type I multiple orthogonal polynomials} for $\n=(n_0,n_1)\in\N^2$ are given by a vector of $2$ polynomials $\left(A_{(n_0,n_1)},B_{(n_0,n_1)}\right)$, with $\deg A_{(n_0,n_1)}\leq n_0-1$ and $\deg B_{(n_0,n_1)}\leq n_1-1$, satisfying the orthogonality and normalisation conditions
\begin{align}
\label{orthogonality conditions type I}
\int x^kA_{(n_0,n_1)}(x)\dd\mu_0(x)+\int x^kB_{(n_0,n_1)}(x)\dd\mu_1(x)=
\begin{cases}
0, &\text{ if } 0\leq k\leq n_0+n_1-2,\\
1, &\text{ if } k=n_0+n_1-1.
\end{cases}
\end{align}
If the measures $\mu_0(x)$ and $\mu_1(x)$ are absolutely continuous with respect to a common positive measure $\mu$, that is, if there exist weight functions $w_0(x)$ and $w_1(x)$ such that $\dis\dd\mu_j(x)=w_j(x)\dd\mu(x)$, for both $j\in\{0,1\}$, then the \textit{type I function} is
\begin{align}
\label{type I function definition}
Q_{(n_0,n_1)}(x)=A_{(n_0,n_1)}(x)w_0(x)+B_{(n_0,n_1)}(x)w_1(x)
\end{align}
and the conditions in \eqref{orthogonality conditions type I} become
\begin{align}
\label{orthogonality conditions type I function}
\int x^kQ_{(n_0,n_1)}(x)\dd\mu(x)
=\begin{cases}
0, &\text{ if } 0\leq k\leq n_0+n_1-2,\\
1, &\text{ if } k=n_0+n_1-1.
\end{cases}
\end{align}
The \textit{type II multiple orthogonal polynomial} for $\n=(n_0,n_1)\in\N^2$ is a monic polynomial $P_{(n_0,n_1)}$ of degree $n_0+n_1$ which satisfies, for both $j\in\{0,1\}$, the orthogonality conditions
\begin{align}
\label{orthogonality conditions type II}
\int x^kP_{(n_0,n_1)}(x)\dd\mu_j(x)=0,
\;\;\;0\leq k\leq n_j-1.
\end{align}
The orthogonality conditions for both type I and type II multiple orthogonal polynomials give a non-homogeneous system of $n_0+n_1$ linear equations for the $n_0+n_1$ unknown coefficients of the vector of polynomials $\left(A_{(n_0,n_1)},B_{(n_0,n_1)}\right)$ in \eqref{orthogonality conditions type I} or the polynomials $P_{(n_0,n_1)}(x)$ in \eqref{orthogonality conditions type II}. 
If the solution exists, it is unique and the corresponding matrices of the system for type I and type II are the transpose to each other. 
However it is possible that this system doesn't have a solution, unless further conditions are imposed
(unlike standard orthogonality on the real line, the existence of such solutions is not a trivial matter). 
If there is a unique solution, then the multi-index $\n$ is called \textit{normal} and if all multi-indices are normal, the system is a \textit{perfect system}. 

An example of systems known to be perfect are the Algebraic Tchebyshev systems, or simply \textit{AT-systems} (see \cite[Ch.~4]{NikishinSorokinBook}).
A pair of measures $\dis\left(\mu_0,\mu_1\right)$ is an AT-system on an interval $I$ for a multi-index $\n=(n_0,n_1)\in\N^2$ if the measures $\mu_0(x)$ and $\mu_1(x)$ are absolutely continuous with respect to a common positive measure $\mu$ on $I$, via weight functions $w_0(x)$ and $w_1(x)$, and the set of functions
\begin{equation*}
\left\{w_0(x),xw_0(x),\cdots,x^{n_0-1}w_0(x),w_1(x),xw_1(x),\cdots,x^{n_1-1}w_1(x)\right\}
\end{equation*}
forms a Chebyshev system on $I$, meaning that for any polynomials $p_0$ and $p_1$ of degree not greater than $n_0-1$ and $n_1-1$, respectively, and not simultaneously equal to $0$, the function $\dis p_0(x)w_0(x)+p_1(x)w_1(x)$ has at most $n_0+n_1-1$ zeros on $I$.
A vector of measures $\dis(\mu_0,\mu_1)$ is an AT-system on an interval $I$ if it is an AT-system on $I$ for every multi-index in $\N^2$.

Another special example of a perfect system is a \textit{Nikishin system} (firstly introduced in \cite{NikishinSystems}). 
A pair of measures $(\mu_0,\mu_1)$ forms a Nikishin system (of order $2$) if both measures are supported on an interval $I_0$ and there exists a positive measure $\sigma$ on an interval $I_1$ with $I_0\cap I_1=\emptyset$ such that
\begin{align}
\label{Nikishin system - ratio of the measures}
\frac{\dd\mu_1(x)}{\dd\mu_0(x)}=\int_{I_1}\frac{\dd\sigma(t)}{x-t}.
\end{align}
It was proved in \cite{NikishinSystemsArePerfect} that every Nikishin system is perfect (see also \cite{NikishinSystemsArePerfectCaseOfUnboundedAndTouchingSupports} for the cases where the supports of the measures are unbounded or where consecutive intervals touch at one point).
More precisely, it is proved in \cite{NikishinSystemsArePerfect} and \cite{NikishinSystemsArePerfectCaseOfUnboundedAndTouchingSupports} that every Nikishin system is an AT-system, therefore it is perfect. 
Moreover, for any $(n_0,n_1)\in\N^2$ belonging to an AT-system on an interval $I$, the type I function for $Q_{(n_0,n_1)}$ defined by \eqref{type I function definition} has exactly $n_0+n_1-1$ sign changes on $I$ and the type II multiple orthogonal polynomial $P_{(n_0,n_1)}$ has $n_0+n_1$ simple zeros on $I$ which satisfy an interlacing property as there is always a zero of $P_{(n_0,n_1)}$ between two consecutive zeros of $P_{(n_0+1,n_1)}$ or $P_{(n_0,n_1+1)}$.
As a Nikishin system is always an AT-system, the same properties hold for Nikishin systems.

The main contribution of this paper is on multi-indices on the step line. 
A multi-index $(n_0,n_1)\in\N^2$ is on the step line if either $n_0=n_1$ or $n_0=n_1+1$ (alternatively to the latter we could consider $n_1=n_0+1$, that change is equivalent to swapping the roles of the measures).
For each $n\in\N$, there is a unique multi-index of length $n$ on the step line of $\N^2$.
More precisely, the multi-index of length $n$ is $\dis\n=(m,m)$, if $n=2m$, or $\dis\n=(m+1,m)$, if $n=2m+1$.
Therefore, when we only consider multi-indices on the step line, we can replace any multi-index by its length without any ambiguity.

For the type II multiple orthogonal polynomials on the step line, we obtain a polynomial sequence with exactly one polynomial of degree $n$ for each $n\in\N$.
These are often referred to as \textit{$d$-orthogonal polynomials} (where $d$ is the number of orthogonality measures), as introduced in \cite{MaroniOrthogonalite}. 
In the case of $d=2$ measures, the  type II multiple orthogonality conditions \eqref{orthogonality conditions type II} on the step line correspond to say that if we set 
\begin{equation*}
%\label{Pn stepline}
P_{2m}(x) = P_{m,m}(x) 
\quad \text{and}\quad 
P_{2m+1}(x) = P_{m+1,m}(x), 
\end{equation*}
then the polynomial sequence $\dis\polyseq$ is \textit{$2$-orthogonal} with respect to a pair of measures $(\mu_0,\mu_1)$ if 
for each $j\in\{0,1\}$ 
\begin{align}
\label{2-orthogonality conditions}
\int x^kP_n(x)\dd\mu_j(x)
=\begin{cases}
0, &\text{if } n\geq 2k+j+1,\\
N_n\neq 0, &\text{if } n=2k+j.
\end{cases}
%\quad\text{for both } j\in\{0,1\}.
\end{align}

Straightforwardly from the definition \eqref{orthogonality conditions type II} observe that $\{P_{(n,0)}\}_{n\geq 0}$ and $\{P_{(0,n)}\}_{n\geq 0}$ are (standard) orthogonal polynomial sequences with respect to the measures $\mu_0$ and $\mu_1$, respectively. As such,   by the spectral theorem for orthogonal polynomials ({\it aka} Shohat-Favard theorem)
: $\{P_{(n,0)}\}_{n\geq 0}$ and $\{P_{(0,n)}\}_{n\geq 0}$ are orthogonal if and only if there exist coefficient two pairs of coefficients $(\beta_{n}^{(0)},\gamma_{n}^{(0)}) $ and 
$( \beta_{n}^{(1)},\gamma_{n}^{(1)})$ with $\gamma_{n}^{(j)}\neq 0$ for all $n\geq 1$ and each $j=1,2$ such that 
 $\{P_{(n,0)}\}_{n\geq 0}$ and $\{P_{(0,n)}\}_{n\geq 0}$ respectively satisfy the second order recurrence relation 
\begin{align*}
p_{n+1}(x)=(x- \beta_{n}^{(j)})p_n(x)- \gamma_{n}^{(1)}p_{n-1}(x),
%\; n\in\N,
\end{align*}
with initial conditions $p_{-1}=0$ and $p_0=1$. 
Moreover, if the $\beta$-coefficients are all real and the $\gamma$-coefficients are all positive, then $\mu$ is a positive measure on the real line.

Multiple orthogonal polynomials also satisfy (nearest-neighbour) recurrence relations (see \cite{WalterNearestNeighborRecurrenceRelations}). 
In particular, when the indexes lie on the step line, a polynomial sequence $\polyseq$ is $2$-orthogonal if and only if it satisfies a third order recurrence relation of the type
\begin{align}
\label{recurrence relation for a 2-OPS}
P_{n+1}(x)=(x-\beta_n)P_n(x)-\alpha_{n}P_{n-1}(x)-\gamma_{n-1}P_{n-2}(x),
\end{align}
with $\gamma_n\neq 0$, for all $n\geq 1$, and initial conditions $P_{-2}=P_{-1}=0$ and $P_0=1$.

The latter recurrence relation can be expressed, for each $n\in\Z^+$, as 
\begin{align}
\label{recurrence relation for a 2-OPS in matrix form}
\mathrm{H}_n\begin{bmatrix} P_0(x) \\ P_1(x) \\  \vdots \\ P_{n-2}(x) \\ P_{n-1}(x) \end{bmatrix}
= x\begin{bmatrix} P_0(x) \\ P_1(x) \\  \vdots \\ P_{n-2}(x) \\ P_{n-1}(x) \end{bmatrix}
- P_n(x) \begin{bmatrix} 0 \\ 0 \\  \vdots \\ 0 \\ 1 \end{bmatrix},
%\quad n\in\N,
\end{align}
involving the truncated lower-Hessenberg matrix
\begin{align}
\label{Hessenberg matrix}
\mathrm{H}_n=\begin{bmatrix} 
\beta_0 & 1 & 0 & 0 & \cdots & 0 \\
\alpha_1 & \beta_1 & 1 & 0 & \cdots & 0 \\ 
\gamma_1 & \alpha_2 & \beta_2 & 1 & \ddots & \vdots \\
0 & \ddots & \ddots & \ddots & \ddots & 0 \\
\vdots & \ddots & \gamma_{n-3} & \alpha_{n-2} & \beta_{n-2} & 1\\
0 & \cdots & 0 & \gamma_{n-2} & \alpha_{n-1} & \beta_{n-1}
\end{bmatrix}.
\end{align}
Therefore, the zeros of $P_n(x)$ correspond to the eigenvalues of the Hessenberg matrix $\mathrm{H}_n$, which highlights  the connection between multiple orthogonal polynomials and the spectral theory of non-selfadjoint operators explored in \cite{AptekarevKaliaguine1998} and \cite{WalterNonsymmetric}, among others.

For the type I multiple orthogonal polynomials on the step line for $r=2$ measures, we have 
\begin{align*}
\deg(A_n)\leq\floor{\frac{n-1}{2}} 
\quad\text{and}\quad 
\deg(B_n)\leq\floor{\frac{n}{2}}-1,
\end{align*}
that is, $\deg(A_n)=m-1$, if $n=2m$ or $n=2m-1$, and $\deg(B_n)=m-1$, if $n=2m$ or $n=2m+1$. 
Assuming that there exists a positive measure $\mu$ and a pair of weight functions $\dis(w_0,w_1)$ such that $\dis\dd\mu_0(x)=w_0(x)\dd\mu(x)$ and $\dis\dd\mu_1(x)=w_1(x)\dd\mu(x)$, the type I function on the step line is
\begin{align}\label{typeI An Bn}
Q_n(x)=A_n(x)w_0(x)+B_n(x)w_1(x)
\end{align}
and the orthogonality and normalisation conditions correspond to
\begin{align}
\int x^kQ_n(x)\dd\mu(x)
=\begin{cases}
0, &\text{ if } 0\leq k\leq n-2,\\
1, &\text{ if } k=n-1.
\end{cases}
\end{align}
Further information about multiple orthogonal polynomials can be found for instance in \cite[Ch.~23]{IsmailBook} and \cite{GuillermoSurvey}.

We start Section \ref{Multiple orthogonality associated with the hypergeometric weights} by showing that the weight functions $\W(x;a,b;c,d)$ and $\W(x;a,b+1;c+1,d)$ in \eqref{hypergeometric weight definition} form a Nikishin system (see Theorem \ref{Nikishin system}).
This readily implies that the multiple orthogonal polynomials of both type II and type I with respect to these weight functions exist and are unique for every multi-index $\n=(n_0,n_1)\in\N^2$ and their zeros satisfy the properties as those of an AT-system. 
Next we obtain a second order differential equation and a matrix differential equation satisfied by the weight functions (see Theorems \ref{differential equation satisfied by the hypergeometric weight - theorem} and \ref{canonical matrix differential equation satisfied by the hypergeometric weights - theorem}, respectively), which we use to deduce differential properties for the multiple orthogonal polynomials of both type II and type I on the step line (see Theorem \ref{differentiation formulas for the multiple orthogonal polynomials - theorem - hypergeometric weights}).
More precisely, we show that the differentiation of both type II and type I polynomials on the step line gives a shift on the parameters as well as on the index.
So this means that these multiple orthogonal polynomials satisfy the so called {\it Hahn's property}: the sequence of its derivatives is again multiple orthogonal.
In particular, the type II polynomials stand as an example of a Hahn-classical $2$-orthogonal family.
Finally, we derive a Rodrigues-type formula for the type I functions on the step line (see Theorem \ref{Rodrigues formula for the type I polynomials - theorem - hypergeometric weights}) as well as a recursive relation generating the type I polynomials.

Section \ref{Type II MOPs wrt hypergeometric weights} is devoted to the characterisation of the $2$-orthogonal polynomials with respect to the pair of weights $\dis\big[\W(x;a,b;c,d),\W(x;a,b+1;c+1,d)\big]$. To begin with, in \S \ref{explicit expression}, we give an explicit expression for these polynomials as terminating generalised hypergeometric series, more precisely as $\dis{}_{3}F_{2}$. 
Generalised hypergeometric series are formally defined by
\begin{align}
\label{generalised hypergeometric series}
\Hypergeometric[z]{p}{q}{\alpha_1,\cdots,\alpha_p}{\beta_1,\cdots,\beta_q}
=\sum_{n=0}^{\infty}
\frac{\pochhammer{\alpha_1}\cdots\pochhammer{\alpha_p}}
{\pochhammer{\beta_1}\cdots\pochhammer{\beta_q}}
\frac{z^n}{n!} \ ,
\end{align}
where $p,q\in\N$, $z,\alpha_1,\cdots,\alpha_p\in\C$ and $\beta_1,\cdots,\beta_p\in\C\backslash\{-n : \ n\in\N\}$.
If one of the parameters $\alpha_1,\cdots,\alpha_p$ is a non-positive integer, the series \eqref{generalised hypergeometric series} terminates and defines a (hypergeometric type) polynomial. 
When the series does not terminate, it converges for all finite values of $z$ if $p\leq q$ and on the open unit disk $|z|<1$ (with convergence on the unit circle depending on the parameters) if $p=q+1$ and it diverges for any $z\neq 0$ otherwise.
When the series is convergent, the function defined by \eqref{generalised hypergeometric series} is a solution to the generalised hypergeometric differential equation (see \cite[Eq.~16.8.3]{DLMF})
\begin{align}
\label{generalised hypergeometric differential equation}
\left[\left(z\,\DiffOp[z]+\beta_1\right)\cdots\left(z\,\DiffOp[z]+\beta_q\right)\DiffOp[z]\right]F(z)
=\left[\left(z\,\DiffOp[z]+\alpha_1\right)\cdots\left(z\,\DiffOp[z]+\alpha_p\right)\right]F(z).
\end{align}

Note that the latter reduces to \eqref{hypergeometric differential equation} when $(p,q)=(2,1)$. Thus, based on the explicit expression for the $2$-orthogonal polynomials we are able to describe them as a solution to a third order differential equation (of hypergeometric type) in \S\ref{differential equation} and in \S\ref{recurrence relation} as a solution to a third order recurrence relation. 
Particular choices on the parameters $a,b,c,d$ of these polynomials result in known multiple orthogonal polynomials. So, in  \S \ref{Jacobi-type 2-OPS and sequence with constant recurrence relation coefficients} we make the connection to the so-called Jacobi-type $2$-orthogonal polynomials investigated in \cite{LamiriOuni}, where  we pay particular attention to the case where all the coefficients are constant. 
The recurrence relation coefficients of the multiple orthogonal polynomials under analysis can be written as combinations of the coefficients of a branched continued fraction representation for a generalised hypergeometric function derived in \cite{AlanSokalEtAlBranchedContinuedFractions}.
Hence, these recurrence coefficients are real, positive and bounded, whose asymptotic behaviour coincides with the one of the recurrence relation coefficients of Jacobi-Pi\~neiro (type II) multiple orthogonal polynomials on the step line studied in \cite{WalterCoussementSomeClassicalMOPs}. 
As a consequence (see \cite{WalterCoussementX2AsymptoticZeroDistribution}), the two distinct polynomial sets also share the same ratio asymptotics and therefore the same asymptotic zero distribution as well as the same Mehler-Heine asymptotic near the endpoint at $0$, as detailed in \S \ref{Jacobi-Pineiro polynomials and asymptotic results}. 
Besides, other particular choices on the parameters $a,b,c,d$ lead to the three components of certain $3$-fold symmetric Hahn-classical $2$-orthogonal polynomials on star-like sets that appeared in \cite{AnaWalter3FoldSym}, as we explain in  \S \ref{Hahn-classical 3-fold symmetric 2-OPS}. 
Finally, we establish confluence relations (or limiting relations on the parameters) to other Hahn-classical $2$-orthogonal polynomials of hypergeometric type, such as the ones investigated in \cite{PaperTricomiWeights}.

\section{Differential properties and multiple orthogonality}
\label{Multiple orthogonality associated with the hypergeometric weights}

This investigation starts with a pair of weight functions $\W(x;a,b;c,d)$ and $\W(x;a,b+1;c+1,d)$ defined in \eqref{hypergeometric weight definition} subject to the constraints \eqref{parameters} on the parameters $a,b,c$ and $d$. The goal is to describe multiple orthogonal polynomials of type I and type II with respect to these weights. Before doing so, we aim to prove that such sets of polynomials exist and are unique. A fact that is proved to be true, after it is shown in Theorem \ref{Nikishin system} in \S \ref{Nikishin system} that the vector of weights $\dis\big[\W(x;a,b+1;c+1,d),\W(x;a,b;c,d)\big]$ forms a Nikishin system. The characterisation of the polynomials is guided by the algebraic and differential properties of the weights. As such, the technical results described in Theorem \ref{canonical matrix differential equation satisfied by the hypergeometric weights - theorem} (regarding the vector of weights) and Theorem \ref{differentiation formulas for the multiple orthogonal polynomials - theorem - hypergeometric weights} (for the differential properties of the polynomials of type II and the type I functions) form the basis of an explicit analysis carried on in \S \ref{Type I multiple orthogonal polynomials} for the type I and in the next Section \ref{Type II MOPs wrt hypergeometric weights} for the type II polynomials. 
 
\subsection{Nikishin system}
We show that the weight vector $\dis\big[\W(x;a,b+1;c+1,d),\W(x;a,b;c,d)\big]$ forms a Nikishin system, which guarantees that both type I and II multiple orthogonal polynomials with respect to these weight functions exist and are unique for every multi-index $\dis(n_0,n_1)\in\N^2$ as well as it implies that the type I multiple orthogonal polynomials $A_{(n_0,n_1)}$ and $B_{(n_0,n_1)}$ have degree exactly $n_0-1$ and $n_1-1$, respectively, and the type II multiple orthogonal polynomial $P_{(n_0,n_1)}$ has $n_0+n_1$ positive real simple zeros that satisfy the usual interlacing property: there is always a zero of $P_{(n_0,n_1)}$ between two consecutive zeros of $P_{(n_0+1,n_1)}$ or $P_{(n_0,n_1+1)}$.

To prove this result, we use the connection between continued fractions and Stieltjes transforms to guarantee the existence of an integral representation of the type in \eqref{Nikishin system - ratio of the measures} for the ratio of the weight functions involved. For simplicity, we follow the notation for continued fractions used in \cite{ContinuedFractionsForSpecialFunctions}: 
\begin{align}
\label{continued fraction}
\contfrac[n=0]{a_n}{b_n}:=\cfrac{a_0}{b_0+\cfrac{a_1}{b_1+\cfrac{a_2}{b_2+\cdots}}}.
\end{align}
Particularly relevant to this work are the so-called {\it Stieltjes continued fractions} or, simply, {\it S-fractions}, due to their connection with Stieltjes transforms which was firstly investigated in \cite{Stieltjesmemoir}.
The continued fraction playing a role here is an example of a modified S-fraction which is obtained if, for some constants $\alpha_k$, $k\in\N$, we set in \eqref{continued fraction}, $a_0=\alpha_0$ and, for any $n\in\N$, $b_n=1$ and $a_{n+1}=\alpha_{n+1}z$, to obtain 
\begin{align}
\label{modified S-fraction}
F(z)=\cfrac{\alpha_0}{1+\cfrac{\alpha_1 z}{1+\cfrac{\alpha_2 z}{1+\cdots}}} .
\end{align}

The main result of this subsection is the following.
\begin{theorem}
\label{Nikishin system}
Let $\dis\W(x;a,b;c,d)$ be given \eqref{hypergeometric weight definition} under the assumptions \eqref{parameters}. 
The ratio 
\[\frac{\W(x;a,b;c,d)}{\W(x;a,b+1;c+1,d)}\] can be represented via the continued fraction  \eqref{modified S-fraction} with $z=x-1$ and 
%$\alpha_n\in\R^+$, for all $n\in\N$, such that $\dis\alpha_n=\left(1-g_{n-1}\right)g_n$, for each $n\geq 1$, with $g_0=0$ and $0<g_n<1$, if $n\geq 1$.
$\dis\alpha_n=\left(1-g_{n-1}\right)g_n$, where $g_0=0$, $\dis g_{2k+1}=\frac{c-b+k}{\delta+2k}$ and $\dis g_{2k+2}=\frac{d-b+k}{\delta+2k+1}$ for $n\geq 1$ and $k  \geq 0$.
Moreover, there exist probability density functions $\sigma$ in $(0,1)$ and $\theta$ in $(1,+\infty)$ such that
\begin{align}
\label{ratio of the hypergeometric weights - integral representations}
\frac{\W(x;a,b;c,d)}{\W(x;a,b+1;c+1,d)}
=\frac{c}{b}\int_{0}^{1}\frac{\dd\sigma(t)}{1+t(x-1)}
=\frac{c}{b}\int_{-\infty}^{-1}\frac{\dd\theta(-u)}{x-1-u}
=\frac{c}{b}\int_{-\infty}^{0}\frac{\dd\theta(1-s)}{x-s}.
\end{align}
Therefore, the vector of weight functions $\dis\big[{\W(x;a,b+1;c+1,d)},{\W(x;a,b;c,d)}\big]$ forms a Nikishin system on the interval $(0,1)$.
\end{theorem}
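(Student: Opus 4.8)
The plan is to establish the continued fraction representation first, then extract the Stieltjes-type integral representations from it, and finally read off the Nikishin structure. The starting point is the contiguous relation for Gauss' hypergeometric function that relates $\twoFoneOneLine[1-x]{c-b,d-b}{\delta}$ to the function with shifted parameters $\twoFoneOneLine[1-x]{c-b,d-b-1}{\delta}$ (or equivalently the weight $\W(x;a,b+1;c+1,d)$). Writing $z=x-1$ and iterating such a three-term contiguous relation, one obtains a continued fraction of Gauss type for the ratio $\W(x;a,b;c,d)/\W(x;a,b+1;c+1,d)$; converting Gauss' continued fraction into the normalised form \eqref{modified S-fraction} produces coefficients of the shape $\alpha_n=(1-g_{n-1})g_n$ with the $g_n$ as claimed. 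I would verify the specific values of $g_{2k+1}$ and $g_{2k+2}$ by matching the even/odd pattern of parameter shifts ($b\mapsto b$, with numerator parameters $c-b+k$ and $d-b+k$ appearing alternately) against the standard Gauss continued fraction coefficients, and checking the $n=1,2$ cases by hand. The prefactor $c/b$ comes from evaluating both weights (or their defining hypergeometric series) at a convenient normalisation point, using the Gauss summation identity recalled in the excerpt.

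Next, I would invoke the connection between S-fractions and Stieltjes transforms: since all the $g_n$ lie in $(0,1)$ under the assumptions \eqref{parameters} (here $\min\{c,d\}>\max\{a,b\}$ and $\delta>0$ guarantee $0<g_n<1$ for all $n$), the coefficients $\alpha_n=(1-g_{n-1})g_n$ are positive and in fact bounded, so the modified S-fraction \eqref{modified S-fraction} converges and represents the Stieltjes transform of a positive measure. Concretely, a convergent S-fraction $\alpha_0/(1+\alpha_1 z/(1+\alpha_2 z/\cdots))$ with $\alpha_n>0$ equals $\int_0^{1/L}\dd\nu(t)/(1+tz)$ for a positive measure $\nu$ whose support is controlled by $L=\limsup$-type bounds on the $\alpha_n$; the uniform bound $\alpha_n\le 1$ (since $g_n\in(0,1)$) forces the support into $[0,1]$. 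This gives the first equality in \eqref{ratio of the hypergeometric weights - integral representations} with $z=x-1$, after absorbing the constant so that $\sigma$ is a probability measure; that $\sigma((0,1))=1$ follows by evaluating the already-known ratio at $x=1$, where the left-hand side equals $c/b$ by \eqref{hypergeometric weight definition} and the Gauss identity. The remaining two equalities are just changes of variable: $u=-t^{-1}\cdot$(appropriate scaling), i.e. $t\mapsto u=-1/t$ maps $(0,1)$ to $(-\infty,-1)$ and $1+t(x-1)=t(x-1-u)$, and then $s=1+u$ shifts $(-\infty,-1)$ to $(-\infty,0)$; one checks the pushforward of $\sigma$ under these maps is a probability density $\theta$ on $(1,+\infty)$.

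Finally, to conclude that $\big[\W(x;a,b+1;c+1,d),\W(x;a,b;c,d)\big]$ is a Nikishin system on $(0,1)$ in the sense of \eqref{Nikishin system - ratio of the measures}, I take $\mu_0(x)=\W(x;a,b+1;c+1,d)\dx$ supported on $I_0=(0,1)$ and $\mu_1(x)=\W(x;a,b;c,d)\dx$; the last integral in \eqref{ratio of the hypergeometric weights - integral representations} reads $\dd\mu_1/\dd\mu_0=\int_{I_1}\dd\widetilde\sigma(s)/(x-s)$ with $I_1=(-\infty,0)$ and $\dd\widetilde\sigma(s)=\tfrac{c}{b}\,\theta(1-s)\dd s$ a positive measure, and $I_0\cap I_1=\emptyset$, which is exactly the defining property of a Nikishin system of order $2$. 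Perfectness (hence existence, uniqueness, exact degrees, and the interlacing of zeros) then follows from the cited results \cite{NikishinSystemsArePerfect,NikishinSystemsArePerfectCaseOfUnboundedAndTouchingSupports}.

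The main obstacle I anticipate is the first step: pinning down exactly which contiguous relation produces the stated $g_n$ and, more delicately, justifying the \emph{convergence} of the continued fraction and the \emph{positivity and unit-interval support} of the representing measure $\sigma$ uniformly under the parameter constraints \eqref{parameters}. The algebraic identity is routine but error-prone in the bookkeeping of parameter shifts; the analytic part — showing $g_n\in(0,1)$ for all $n$ and deducing the precise support $(0,1)$ for $\sigma$ rather than a larger interval — is where the assumptions $\min\{c,d\}>\max\{a,b\}$ and $\delta>0$ must be used in full, and is the step I would write out most carefully.
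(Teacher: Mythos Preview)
Your proposal is correct and follows essentially the same route as the paper: express the ratio of weights as a ratio of contiguous ${}_2F_1$'s, recognise this as a Gauss-type continued fraction in $g$-fraction form with $g_n\in(0,1)$, and then invoke the Wall/Stieltjes theory of chain sequences to obtain the integral representation on $(0,1)$, with the remaining equalities being changes of variable. One small imprecision: the support of $\sigma$ lying in $[0,1]$ is not a consequence of the bound $\alpha_n\le 1$ per se, but of the chain-sequence structure $\alpha_n=(1-g_{n-1})g_n$ with $0\le g_n\le 1$ (this is exactly what the paper obtains by citing \cite[Eq.~27.8 and Th.~67.1]{WallContinuedFractions}); you already have that structure in hand, so this is only a matter of citing the correct theorem rather than a gap.
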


\begin{proof}%[Proof of Theorem \ref{Nikishin system}.] 
Recalling \eqref{hypergeometric weight definition},
\begin{equation}
\label{ratio of hypergeometric weights}
\frac{\W(x;a,b;c,d)}{\W(x;a,b+1;c+1,d)}
=\frac{c}{b}\,\frac{\twoFoneOneLine[1-x]{c-b,d-b}{\delta}}{\twoFoneOneLine[1-x]{c-b,d-b-1}{\delta}},
\quad \text{with } \delta=c+d-a-b.
\end{equation}
Therefore, the ratio of weight functions above admits a representation similar to Gauss' continued fraction.
More precisely, based on \cite[Eq.~(14.29)]{AlanSokalEtAlBranchedContinuedFractions}, the ratio of weights in \eqref{ratio of hypergeometric weights} can be represented by a continued fraction of the type in \eqref{modified S-fraction}, with $z=x-1$, and coefficients
\begin{align*}
\alpha_0=\frac{c}{b};\;
\alpha_1=\frac{c-b}{\delta};\;
\alpha_{2k+1}=\frac{(c-b+k)(c-a+k)}{(\delta+2k-1)(\delta+2k)},\,k\geq 1;\;
\dis\alpha_{2k+2}=\frac{(d-b+k)(d-a+k)}{(\delta+2k)(\delta+2k+1)},\,k\in\N.
\end{align*}
Moreover, the coefficients $\dis\alpha_n$, $n\geq 1$, can be rewritten as $\dis\alpha_n=\left(1-g_{n-1}\right)g_n$, \vspace*{0,1 cm}
with $g_0=0$ and, for each $k\in\N$, $\dis g_{2k+1}=\frac{c-b+k}{\delta+2k}$ and $\dis g_{2k+2}=\frac{d-b+k}{\delta+2k+1}$ 
(see \cite[Eqs.~(2.7)-(2.8)]{Kustner}). \vspace*{0,1 cm}
Note that $0<g_n<1$, for all $n\geq 1$, and, as a result, the continued fraction described above is of the type in \cite[Eq.~27.8]{WallContinuedFractions}. 
Therefore, the first integral representation in \eqref{ratio of the hypergeometric weights - integral representations} can be derived directly from \cite[Eq.~67.5]{WallContinuedFractions} and the second one can be deduced combining \cite[Ths.~67.1~\&~27.5]{WallContinuedFractions}, while the last equality  in \eqref{ratio of the hypergeometric weights - integral representations} is obtained via the change of variable $s=u+1$.
\end{proof}

Under the additional assumption $b>a-1$ and using a recent result from Dyachenko and Karp in \cite{DyachenkoKarp}, the generating measure $\sigma$ in \eqref{ratio of the hypergeometric weights - integral representations} admits the following integral representation 
\begin{subequations}
\begin{align}
\label{W over W+}
\hspace{-.4cm}\frac{\W(x;a,b;c,d)}{\W(x;a,b+1;c+1,d)}
=\int_{0}^{1}\hspace{-.1cm}\frac{\lambda t^{c+d-2b-2}(1-t)^{b-a}\mathrm{dt}}{\left(1+t(x-1)\right)\left|\twoFoneOneLine[t^{-1}]{c-b,d-b-1}{\delta}\right|^2}+K(c,d)
\end{align}
with
\begin{align*}
\lambda=\frac{c\left(\Gamma(\delta)\right)^2}{b\Gamma(c-b)\Gamma(d-b)\Gamma(d-a)\Gamma(c-a+1)}
\;\text{ and }\;
K(c,d)=\begin{cases}
0, & \text{ if } d\leq c+1,\\
\frac{d-c-1}{d-1}, & \text{ if } d\geq c+1.
\end{cases}
\end{align*}
The change of variable $t= \frac{1}{1-s}$ in \eqref{W over W+} gives
\begin{align}\label{W over W+2}
\frac{\W(x;a,b;c,d)}{\W(x;a,b+1;c+1,d)}
=\int_{-\infty}^{0}\frac{\lambda(-s)^{b-a}(1-s)^{1-\delta}}{(x-s)\Big|\twoFoneOneLine[1-s]{c-b,d-b-1}{\delta}\Big|^2}\mathrm{d}s+K(c,d).
\end{align}
\end{subequations}
Hence, if $b>a-1$, the measures in the first and last integral representations in \eqref{ratio of the hypergeometric weights - integral representations} can be explicitly represented by \eqref{W over W+} and \eqref{W over W+2}, respectively.

\subsection{Differential properties}

We start by describing the weight function $\W(x;a,b;c,d)$ in \eqref{hypergeometric weight definition} as a solution to a second-order ordinary differential equation, to then describe the vector of weight functions 
\begin{equation}
\label{hypergeometric weight vector}
	\Wvec(x;a,b;c,d):=\twovector{\W(x;a,b;c,d)}{\W(x;a,b+1;c+1,d)}, 
\end{equation}
as a solution to a system of first order differential equations in Theorem \ref{canonical matrix differential equation satisfied by the hypergeometric weights - theorem}. A result that is crucial to obtain, in Theorem \ref{differentiation formulas for the multiple orthogonal polynomials - theorem - hypergeometric weights}, differential properties on the system of multiple orthogonal polynomials of type II and the functions of type I, revealing their Hahn-classical property.

\begin{proposition}
\label{differential equation satisfied by the hypergeometric weight - theorem}
For $a,b,c,d\in\R^+$ such that $\min\{c,d\}>\max\{a,b\}$, let $\dis\W(x):=\W(x;a,b;c,d)$ be the weight function defined by \eqref{hypergeometric weight definition}. Then
\begin{align}
\label{ODE satisfied by the hypergeometric weight}
(1-x)x^2\W''(x)+\left((c+d-5)x-(a+b-3)\right)x\,\W'(x)+\left((a-1)(b-1)-(c-2)(d-2)x\right)\W(x)=0.
\end{align}
\end{proposition}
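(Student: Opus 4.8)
The plan is to derive the second-order ODE directly from the defining formula \eqref{hypergeometric weight definition}, using the hypergeometric differential equation \eqref{hypergeometric differential equation} for the factor $F(x):=\twoFoneOneLine[1-x]{c-b,d-b}{\delta}$. First I would write $\W(x;a,b;c,d)=C\,x^{a-1}(1-x)^{\delta-1}F(x)$ with $C$ the normalising constant and $\delta=c+d-a-b$, and observe that $F(x)$, as a function of the variable $z=1-x$, satisfies \eqref{hypergeometric differential equation} with $(\alpha,\beta,\gamma)=(c-b,d-b,\delta)$. Translating that equation back to the $x$-variable (so $\frac{\dd}{\dd z}=-\frac{\dd}{\dd x}$, $z(1-z)=x(1-x)$, and $\gamma-(\alpha+\beta+1)z = \delta-(c+d-2b+1)(1-x)$) gives a second-order linear ODE for $F$ alone:
\begin{align*}
x(1-x)F''(x)+\big((c-a-1)-(2\delta - \text{?})x\big)F'(x)-(c-b)(d-b)F(x)=0,
\end{align*}
whose exact intermediate coefficients I would pin down carefully but which is entirely routine.

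Next I would substitute $F(x)=x^{1-a}(1-x)^{1-\delta}\W(x)/C$ into that ODE, using the product rule to express $F'$ and $F''$ in terms of $\W,\W',\W''$. The prefactor $x^{1-a}(1-x)^{1-\delta}$ contributes logarithmic-derivative terms: $F'/F = \W'/\W + (1-a)/x + (\delta-1)/(x-1)$, and similarly for the second derivative. After clearing the common factor $x^{1-a}(1-x)^{1-\delta}$ and multiplying through by an appropriate power of $x(1-x)$ to remove denominators, one is left with a second-order ODE for $\W$ with polynomial coefficients. I expect the leading term to come out as $(1-x)x^2\W''$ after multiplying by $x$, matching \eqref{ODE satisfied by the hypergeometric weight}; the first-order and zeroth-order coefficients then have to be collected and simplified, and the claim is precisely that they equal $\big((c+d-5)x-(a+b-3)\big)x$ and $(a-1)(b-1)-(c-2)(d-2)x$ respectively.

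An alternative, perhaps cleaner, route is to avoid the $F$-to-$\W$ substitution and instead work with the operator identity behind \eqref{hypergeometric weight definition}: one can check that $\W(x;a,b;c,d)=C\,x^{a-1}(1-x)^{\delta-1}\,{}_2F_1$ is annihilated by the formal adjoint of the hypergeometric operator conjugated by the prefactor, which is the standard way one obtains Pearson-type equations for classical weights. Concretely, writing $\W$ in terms of its Mellin transform / moments \eqref{moments of the hypergeometric weight}, the three-term contiguity among $\pochhammer{a}\pochhammer{b}/(\pochhammer{c}\pochhammer{d})$ in $n$ translates into a differential relation in $x$; matching the shifts $a\mapsto a-1$, etc., that appear in the coefficients $(a-1)$, $(b-1)$, $(c-2)$, $(d-2)$ suggests this is the natural origin of the equation. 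Either way, the computation is deterministic.

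The main obstacle is purely bookkeeping: correctly transcribing \eqref{hypergeometric differential equation} through the change of variable $z=1-x$ and then through the conjugation by $x^{a-1}(1-x)^{\delta-1}$, keeping track of the many cancellations so that the final first- and zeroth-order coefficients collapse to the stated compact polynomials. In particular the zeroth-order coefficient mixes the constant $(c-b)(d-b)$ coming from $F$ with cross terms from differentiating the prefactor, and one must verify these combine to exactly $(a-1)(b-1)-(c-2)(d-2)x$; I would double-check this by evaluating both sides of \eqref{ODE satisfied by the hypergeometric weight} on the leading monomial behaviour $\W(x)\sim \text{const}\cdot x^{a-1}$ near $x=0$ and on $(1-x)^{\delta-1}$ near $x=1$, and by testing a degenerate parameter case (e.g.\ $c-b$ or $d-b$ a non-negative integer, where ${}_2F_1$ is a polynomial) as a sanity check. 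No genuinely hard step is expected.
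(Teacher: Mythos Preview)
Your approach is essentially the same as the paper's: write $\W(x)=\lambda\,x^{a-1}(1-x)^{\delta-1}F(1-x)$, transport the hypergeometric ODE \eqref{hypergeometric differential equation} for $F$ through the change of variable $z=1-x$, and then pass from $F$ to $\W$ via the prefactor, collecting terms. The paper organises the bookkeeping slightly differently (it differentiates $\W$ and expresses $\W,\W',\W''$ as $\lambda x^{a-1-j}(1-x)^{\delta-1-j}F_j(x)$, then uses the ODE to reduce $F_2$ to a combination of $F_0,F_1$), but this is the same computation as your conjugation-by-prefactor route, just run in the other direction.
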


\begin{proof}
We set $\dis\delta=c+d-a-b$, $\dis\lambda=\frac{\Gamma(c)\Gamma(d)}{\Gamma(a)\Gamma(b)\Gamma(\delta)}$ and $\dis F(z)=\twoFoneOneLine[z]{c-b,d-b}{\delta}$ so that
\begin{align*}
\W(x)=\lambda x^{a-1}(1-x)^{\delta-1}F(1-x).
\end{align*}
Differentiating this expression twice, we get
\begin{align}
\label{W,W',W''}
\W^\roundparameter{j}(x)=\lambda x^{a-1-j}(1-x)^{\delta-1-j}F_j(x),
\quad j=0,1,2,
\end{align}
with $\dis F_0(x)=F(1-x)$,
\begin{align*}
F_1(x)%&=\big((2-a-\delta)x+(a-1)\big)F_0(x)+x(1-x)F_0'(x) \\&
=\big((2+b-c-d)x+(a-1)\big)F(1-x)-x(1-x)F'(1-x)
\end{align*}
and
\begin{align*}
F_2(x)%&=\big((4+b-c-d)x+(a-2)\big)F_1(x)+x(1-x)F_1'(x) \\
&=x^2(1-x)^2F''(1-x)+2x(1-x)\Big((c+d-b-2)x+(1-a)\Big)F'(1-x) \\&
+\Big((c+d-b-2)(c+d-b-3)x^2-2(a-1)(c+d-b-3)x+(a-1)(a-2)\Big)F(1-x).
\end{align*}
Recall  \eqref{hypergeometric differential equation} %to $\dis F(1-x)=\twoFoneOneLine[1-x]{c-b,d-b}{\delta}$ to derive that
to derive that  $\dis F(1-x)=\twoFoneOneLine[1-x]{c-b,d-b}{\delta}$  satisfies 
\begin{align}
x(1-x)F''(1-x)=(c-b)(d-b)F(1-x)-((c+d-2b+1)x+(b-a-1))F'(1-x),
\end{align}
which can be used to rewrite $F_2(x)$ as
\begin{align*}
F_2(x)=\left((5-c-d)x+(a+b-3)\right)F_1(x)+(1-x)\left((c-2)(d-2)x-(a-1)(b-1)\right)F_0(x).
\end{align*}
Combining the latter relation with \eqref{W,W',W''}, we derive \eqref{ODE satisfied by the hypergeometric weight}.
\end{proof}

Based on the second order differential equation \eqref{ODE satisfied by the hypergeometric weight} we deduce a system of first order differential equations for which the vector \eqref{hypergeometric weight vector} is a solution.   \\

\begin{theorem}
\label{canonical matrix differential equation satisfied by the hypergeometric weights - theorem}
Let $\Wvec(x;a,b;c,d)$ as defined in \eqref{hypergeometric weight vector} subject to \eqref{parameters}. Then, the following identities hold 
\begin{equation}
\label{shift on the parameters - hypergeometric weights}
x\,\Phi(x)\Wvec(x;a,b;c,d)=\Wvec(x;a+1,b+1;d+1,c+2)
\end{equation}
and
\begin{equation}
\label{canonical matrix differential equation satisfied by the hypergeometric weights}
\DiffOp\Big(x\,\Phi(x)\Wvec(x;a,b;c,d)\Big)
+\Psi(x)\Wvec(x;a,b;c,d)=0,
\end{equation}
where
\begin{align}
\label{matrix Phi in matrix diff eq}
\Phi(x):=\Phi(x;a,b;c,d)=
\matrixtwobytwo{\frac{c(c+1)d}{ab(c-b)}}{-\frac{(c+1)d}{a(c-b)}}
{-\frac{c(c+1)d(d+1)}{ab(b+1)(d-a)}\,x}{\frac{(c+1)d(d+1)}{a(b+1)(d-a)}}
\end{align}
and
\begin{align}
\label{matrix Psi in matrix diff eq}
\Psi(x):=\Psi(x;a,b;c,d)=
\matrixtwobytwo{-\frac{c(c+1)d}{a(c-b)}}{\frac{c(c+1)d}{a(c-b)}}
{\frac{c(c+1)d^2(d+1)}{ab(b+1)(d-a)}\,x}{-\frac{(c+1)d(d+1)}{(b+1)(d-a)}}.
\end{align}

\end{theorem}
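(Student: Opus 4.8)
The plan is to establish the shift identity \eqref{shift on the parameters - hypergeometric weights} first and then deduce the matrix Pearson-type equation \eqref{canonical matrix differential equation satisfied by the hypergeometric weights} from it by a single differentiation. Throughout I would use the representation from the proof of Proposition \ref{differential equation satisfied by the hypergeometric weight - theorem}, writing $\W(x;a,b;c,d)=\lambda\,x^{a-1}(1-x)^{\delta-1}F(1-x)$ with $\lambda=\frac{\Gamma(c)\Gamma(d)}{\Gamma(a)\Gamma(b)\Gamma(\delta)}$, $F(z)=\twoFoneOneLine[z]{c-b,d-b}{\delta}$ and $\delta=c+d-a-b$, together with $\W(x;a,b+1;c+1,d)=\frac{c}{b}\,\lambda\,x^{a-1}(1-x)^{\delta-1}G(1-x)$, where $G(z)=\twoFoneOneLine[z]{c-b,d-b-1}{\delta}$ (the value of $\delta$ is unchanged by $b\mapsto b+1$, $c\mapsto c+1$). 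The structural point is that the parameter shift $(a,b,c,d)\mapsto(a+1,b+1,d+1,c+2)$ sends $\delta\mapsto\delta+1$ while the interior differences $c-b$ and $d-b$ move by at most one, so the two entries of $\Wvec(x;a+1,b+1;d+1,c+2)$ — namely $\W(x;a+1,b+1;d+1,c+2)$ and $\W(x;a+1,b+2;d+2,c+2)$ — carry the same factor $x^{a-1}(1-x)^{\delta-1}$ up to explicit powers of $x$ and $1-x$, so that \eqref{shift on the parameters - hypergeometric weights} becomes an identity among Gauss functions of argument $1-x$.

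For \eqref{shift on the parameters - hypergeometric weights} I would expand both components of $x\,\Phi(x)\Wvec(x;a,b;c,d)$ using \eqref{matrix Phi in matrix diff eq} and the representations above, and likewise expand $\W(x;a+1,b+1;d+1,c+2)$ and $\W(x;a+1,b+2;d+2,c+2)$. Cancelling the common Gamma-function constants (via $\Gamma(c+1)=c\,\Gamma(c)$, $\Gamma(\delta+1)=\delta\,\Gamma(\delta)$, etc.) and the common power of $x$, the identity reduces, after writing $z=1-x$, to the two contiguous relations
\begin{align*}
\twoFoneOneLine[z]{\alpha,\beta}{\gamma}-\twoFoneOneLine[z]{\alpha,\beta-1}{\gamma}&=\frac{\alpha z}{\gamma}\,\twoFoneOneLine[z]{\alpha+1,\beta}{\gamma+1},\\
\twoFoneOneLine[z]{\alpha,\beta}{\gamma}&=\frac{\alpha}{\gamma}\,\twoFoneOneLine[z]{\alpha+1,\beta}{\gamma+1}+\frac{\gamma-\alpha}{\gamma}\,\twoFoneOneLine[z]{\alpha,\beta}{\gamma+1},
\end{align*}
taken at $(\alpha,\beta,\gamma)=(c-b,d-b,\delta)$: the first alone gives the top component (after the symmetry $\twoFoneOneLine[z]{\alpha,\beta}{\gamma}=\twoFoneOneLine[z]{\beta,\alpha}{\gamma}$), and the two together give the bottom one, where one uses $\gamma-\alpha=d-a$ and $\delta=(c-b)+(d-a)$. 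Both relations are classical (see \cite[\S15.5]{DLMF}) and are immediate by comparing the coefficient of $z^n$ in \eqref{2F1 definition}.

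Given \eqref{shift on the parameters - hypergeometric weights}, the matrix equation \eqref{canonical matrix differential equation satisfied by the hypergeometric weights} is equivalent to the lowering formula $\DiffOp\,\Wvec(x;a+1,b+1;d+1,c+2)=-\Psi(x)\,\Wvec(x;a,b;c,d)$, obtained simply by differentiating \eqref{shift on the parameters - hypergeometric weights}. I would prove this the same way: differentiate each entry of $\Wvec(x;a+1,b+1;d+1,c+2)$ via the product rule on $x^{a}(1-x)^{\delta}$ together with $\DiffOp[z]\twoFoneOneLine[z]{\alpha,\beta}{\gamma}=\frac{\alpha\beta}{\gamma}\twoFoneOneLine[z]{\alpha+1,\beta+1}{\gamma+1}$; after pulling out $x^{a-1}(1-x)^{\delta-1}$ each entry becomes a linear combination, with coefficients rational in $x$ and $a,b,c,d$, of Gauss functions with parameters contiguous to $(c-b,d-b;\delta)$, and a few applications of Gauss' contiguous relations collapse all of them to $F(1-x)$ and $G(1-x)$, that is, to $\W(x;a,b;c,d)$ and $\W(x;a,b+1;c+1,d)$; reading off the resulting $2\times 2$ coefficient matrix then yields precisely $-\Psi(x)$ as in \eqref{matrix Psi in matrix diff eq}. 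An alternative for this step would be to invoke Proposition \ref{differential equation satisfied by the hypergeometric weight - theorem} directly — a scalar second-order linear ODE is equivalent to a first-order $2\times2$ system — so that \eqref{canonical matrix differential equation satisfied by the hypergeometric weights} follows from \eqref{ODE satisfied by the hypergeometric weight} combined with a structure relation expressing $\W(x;a,b+1;c+1,d)$ in terms of $\W(x;a,b;c,d)$ and its derivative; this route entails essentially the same amount of computation.

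The step I expect to be the real obstacle is the bookkeeping in \eqref{canonical matrix differential equation satisfied by the hypergeometric weights}: the shift $(a,b,c,d)\mapsto(a+1,b+1,d+1,c+2)$ interchanges the roles of $c$ and $d$ and bumps one of them by two, so tracking the Gauss parameters through the derivative rule and the ensuing contiguous reductions is delicate, and at the end one must verify that the rational coefficients produced coincide on the nose with the (somewhat intricate) entries of $\Phi(x)$ and $\Psi(x)$. The hypergeometric identities invoked are all elementary; the work lies in organising the reduction so that each component lands exactly in the two-dimensional span of $\W(x;a,b;c,d)$ and $\W(x;a,b+1;c+1,d)$, and in not committing arithmetic slips.
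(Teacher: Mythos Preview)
Your proposal is correct and follows essentially the same route as the paper: first verify \eqref{shift on the parameters - hypergeometric weights} by stripping the common factor $x^{a-1}(1-x)^{\delta-1}$ and reducing to Gauss contiguous relations at $(\alpha,\beta,\gamma)=(c-b,d-b,\delta)$, then differentiate each component of $\Wvec(x;a+1,b+1;d+1,c+2)$ and reduce back to the span of $\W(x;a,b;c,d)$ and $\W(x;a,b+1;c+1,d)$ via contiguous relations to read off $-\Psi(x)$. The paper cites \cite[Eqs.~15.5.15,~15.5.16,~15.5.19]{DLMF} where you state your two identities directly, but these are equivalent packagings of the same relations; your second displayed identity combined with the first gives exactly the paper's key step $\twoFoneOneLine[z]{\alpha,\beta-1}{\gamma}-(1-z)\twoFoneOneLine[z]{\alpha,\beta}{\gamma}=\frac{\gamma-\alpha}{\gamma}\,z\,\twoFoneOneLine[z]{\alpha,\beta}{\gamma+1}$ used for the second component.
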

\begin{proof}[Proof of Theorem \ref{canonical matrix differential equation satisfied by the hypergeometric weights - theorem}.] 
In order to prove \eqref{shift on the parameters - hypergeometric weights}, we need to check that 
\begin{align*}
\twovector{\V_0(x)}{\V_1(x)}
:=x\,\Phi(x)\twovector{\W(x;a,b;c,d)}{\W(x;a,b+1;c+1,d)}
=\twovector{\W(x;a+1,b+1;d+1,c+2)}{\W(x;a+1,b+2;d+2,c+2)}.
\end{align*}

Firstly,
\begin{align*}
\V_0(x) &
=x\,\frac{(c+1)d}{a(c-b)}\left(\frac{c}{b}\,\W(x;a,b;c,d)-\W(x;a,b+1;c+1,d)\right) \\&
=\frac{\Gamma(c+2)\Gamma(d+1)x^a(1-x)^{\delta-1}}{\Gamma(a+1)\Gamma(b+1)\Gamma(\delta)(c-b)}
\left(\twoFone[1-x]{c-b,d-b}{\delta}-\twoFone[1-x]{c-b,d-b-1}{\delta}\right).
\end{align*}

Based on \cite[Eq.~15.5.15 \& Eq.~15.5.16]{DLMF}, we obtain, respectively,
\begin{align*}
(c-b)\,\twoFone[1-x]{c-b+1,d-b}{\delta+1}
=\delta\,\twoFone[1-x]{c-b,d-b}{\delta}-(d-a)\,\twoFone[1-x]{c-b,d-b}{\delta+1}
\end{align*}
and
\begin{align*}
\frac{d-a}{\delta}(1-x)\twoFone[1-x]{c-b,d-b}{\delta+1}
=\twoFone[1-x]{c-b,d-b-1}{\delta}-x\,\twoFone[1-x]{c-b,d-b}{\delta}.
\end{align*}

Therefore, we can derive that
\begin{align*}
\frac{c-b}{\delta}\,(1-x)\,\twoFone[1-x]{c-b+1,d-b}{\delta+1}
=\twoFone[1-x]{c-b,d-b}{\delta}-\twoFone[1-x]{c-b,d-b-1}{\delta}
\end{align*}
and, as a result,
\begin{align}
\label{V0}
\V_0(x)
=\frac{\Gamma(c+2)\Gamma(d+1)x^a(1-x)^{\delta}}{\Gamma(a+1)\Gamma(b+1)\Gamma(\delta+1)}\, \twoFone[1-x]{c-b+1,d-b}{\delta+1}
=\W(x;a+1,b+1;d+1,c+2).
\end{align}

Similarly, we have
\begin{align*}
\V_1(x) &
=x\,\frac{(c+1)d(d+1)}{a(b+1)(d-a)}\left(\W(x;a,b+1;c+1,d)-\frac{c}{b}\,x\,\W(x;a,b;c,d)\right) \\&
=\frac{\Gamma(c+2)\Gamma(d+2)x^a(1-x)^{\delta-1}}{\Gamma(a+1)\Gamma(b+2)\Gamma(\delta)(d-a)}
\left(\twoFone[1-x]{c-b,d-b-1}{\delta}-x\,\twoFone[1-x]{c-b,d-b}{\delta}\right)
\end{align*}
and
\begin{align*}
\twoFone[1-x]{c-b,d-b-1}{\delta}-x\,\twoFone[1-x]{c-b,d-b}{\delta}
=\frac{d-a}{\delta}(1-x)\twoFone[1-x]{c-b,d-b}{\delta+1},
\end{align*}
hence we get
\begin{align}
\label{V1}
\V_1(x)
=\frac{\Gamma(c+2)\Gamma(d+2)x^a(1-x)^{\delta}}{\Gamma(a+1)\Gamma(b+2)\Gamma(\delta+1)} \twoFone[1-x]{d-b,c-b}{\delta+1}
=\W(x;a+1,b+2;d+2,c+2).
\end{align}

In order to prove \eqref{canonical matrix differential equation satisfied by the hypergeometric weights}, we need to check that 
\begin{align*}
\twovector{\V'_0(x)}{\V'_1(x)}
=\twovector{\frac{c(c+1)d}{a(c-b)}\Big(\W(x;a,b;c,d)-\W(x;a,b+1;c+1,d)\Big)}
{\frac{(c+1)d(d+1)}{(b+1)(d-a)}\Big(\W(x;a,b+1;c+1,d)-\frac{cd}{ab}\,x\,\W(x;a,b;c,d)\Big)}.
\end{align*}

Recalling \eqref{V0},
\begin{align*}
\V'_0(x) &
=\W'(x;a+1,b+1;d+1,c+2)  \\&
=\frac{\Gamma(c+2)\Gamma(d+1)}{\Gamma(a+1)\Gamma(b+1)\Gamma(\delta+1)}
 \DiffOp\left(x^a(1-x)^{\delta}\,\twoFone[1-x]{c-b+1,d-b}{\delta+1}\right),
\end{align*}
which is equivalent to
\begin{align*}
\V'_0(x)=\frac{\Gamma(c+2)\Gamma(d+1)x^{a-1}(1-x)^{\delta-1}}{\Gamma(a+1)\Gamma(b+1)\Gamma(\delta+1)}\,G_0(x),
\end{align*}
with
\begin{align*}
G_0(x)= &
(a-(c+d-b)x)\twoFone[1-x]{c-b+1,d-b}{\delta+1} \\&
-\frac{(c-b+1)(d-b)}{\delta+1}x(1-x)\twoFone[1-x]{c-b+2,d-b+1}{\delta+2}.
\end{align*}

Using \cite[Eq.~15.5.19]{DLMF},
\begin{align}
\label{?}
\begin{split}
&\frac{(c-b+1)(d-b)}{\delta+1}x(1-x)\,\twoFone[1-x]{c-b+2,d-b+1}{\delta+2}\\
=&\,\delta\,\twoFone[1-x]{c-b,d-b-1}{\delta}
-\big((c+d-2b)x+(b-a)\big)\,\twoFone[1-x]{c-b+1,d-b}{\delta+1}.
\end{split}
\end{align}
so that
\begin{align*}
G_0(x) &
=-\delta\,\twoFone[1-x]{c-b,d-b-1}{\delta}+b(1-x)\,\twoFone[1-x]{c-b+1,d-b}{\delta+1} \\&
=\frac{\delta}{c-b}\left(b\,\twoFone[1-x]{c-b,d-b}{\delta}-c\,\twoFone[1-x]{c-b,d-b-1}{\delta+1}\right).
\end{align*}
Therefore,
\begin{align*}
\V'_0(x)
=\frac{\Gamma(c+2)\Gamma(d+1)x^{a-1}(1-x)^{\delta-1}}{\Gamma(a+1)\Gamma(b)\Gamma(\delta)(c-b)}
\left(\twoFone[1-x]{c-b,d-b}{\delta}-\frac{c}{b}\,\twoFone[1-x]{c-b,d-b-1}{\delta+1}\right),
\end{align*}
which implies that
\begin{align}
\label{V0'}
\V'_0(x)=\frac{c(c+1)d}{a(c-b)}\Big(\W(x;a,b;c,d)-\W(x;a,b+1;c+1,d)\Big).
\end{align}

Similarly,
\begin{align*}
\V'_1(x)=\W'(x;a+1,b+2;d+2,c+2)
=\frac{\Gamma(c+2)\Gamma(d+2)}{\Gamma(a+1)\Gamma(b+2)\Gamma(\delta+1)}
\DiffOp\left(x^a(1-x)^{\delta}\,\twoFone[1-x]{c-b,d-b}{\delta+1}\right),
\end{align*}
which is equivalent to
\begin{align*}
\V'_1(x)=\frac{\Gamma(c+2)\Gamma(d+1)x^{a-1}(1-x)^{\delta-1}}{\Gamma(a+1)\Gamma(b+1)\Gamma(\delta+1)}\,G_1(x),
\end{align*}
with
\begin{align*}
G_1(x) =&(a-(c+d-b)x)\twoFone[1-x]{c-b,d-b}{\delta+1} \\
&-\frac{(c-b)(d-b)}{\delta+1}x(1-x)\twoFone[1-x]{c-b+1,d-b+1}{\delta+2}.
\end{align*}

Using \eqref{?} with a shift $c\to c-1$ and $a\to a-1$, we derive that
\begin{align*}
G_1(x) &
=-\delta\,\twoFone[1-x]{c-b-1,d-b-1}{\delta}+(b+1)(1-x)\,\twoFone[1-x]{c-b,d-b}{\delta+1} \\&
=\frac{\delta}{d-a}\left(a\,\twoFone[1-x]{c-b,d-b-1}{\delta}-dx\,\twoFone[1-x]{c-b,d-b}{\delta}\right).
\end{align*}
Therefore,
\begin{align*}
\V'_1(x)=& \frac{\Gamma(c+2)\Gamma(d+2)x^{a-1}(1-x)^{\delta-1}}{\Gamma(a)\Gamma(b+2)\Gamma(\delta)(d-a)}
	\Big(\twoFone[1-x]{c-b,d-b}{\delta} \\ 
	&-\frac{d}{a}\,x\,\twoFone[1-x]{c-b,d-b-1}{\delta+1}\Big),
\end{align*}
which implies that
\begin{align}
\V'_1(x)=\frac{(c+1)d(d+1)}{(b+1)(d-a)}\Big(\W(x;a,b+1;c+1,d)-\frac{cd}{ab}\,x\,\W(x;a,b;c,d)\Big).
\end{align}
\end{proof}

The latter result  guarantees  the Hahn-classical property of the multiple orthogonal polynomials investigated here. 
In fact, combining it with \cite[Prop.~2.6~\&~2.7]{PaperTricomiWeights} we show that the differentiation with respect to the variable of both type I and type II polynomials on the step line gives a shift on the parameters as well as on the index, as detailed in the next result.

\begin{theorem}
\label{differentiation formulas for the multiple orthogonal polynomials - theorem - hypergeometric weights}
For $a,b,c,d\in\R^+$ such that $\min\{c,d\}>\max\{a,b\}$, let $\dis P_n(x;a,b;c,d)$ and $\dis Q_n(x;a,b;c,d)$, with $n\in\N$, be, respectively, the type II multiple orthogonal polynomial and the type I function for the index of length $n$ on the step line with respect to $\dis\Wvec(x;a,b;c,d)$.
Then
\begin{align}
\label{differentiation formula for type II polynomials - hypergeometric weights}
\DiffOp\left(P_{n+1}(x;a,b;c,d)\right)=(n+1)P_n(x;a+1,b+1;d+1,c+2)
\end{align}
and 
\begin{align}
\label{differentiation formula for type I polynomials - hypergeometric weights}
\DiffOp\big(Q_n(x;a+1,b+1;d+1,c+2)\big)=-n\,Q_{n+1}(x;a,b;c,d). 
\end{align}
\end{theorem}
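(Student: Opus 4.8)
The plan is to read off both identities from the canonical matrix differential equation of Theorem \ref{canonical matrix differential equation satisfied by the hypergeometric weights - theorem}: that theorem places the weight vector $\Wvec(x;a,b;c,d)$ exactly within the hypotheses of \cite[Prop.~2.6~\&~2.7]{PaperTricomiWeights}, whose conclusions are precisely \eqref{differentiation formula for type II polynomials - hypergeometric weights} and \eqref{differentiation formula for type I polynomials - hypergeometric weights}; I will recall the underlying mechanism rather than reproduce those general statements. Abbreviate $\Wvec:=\Wvec(x;a,b;c,d)$, with components $\W_0=\W(x;a,b;c,d)$ and $\W_1=\W(x;a,b+1;c+1,d)$, and $\Wvec^{(1)}:=\Wvec(x;a+1,b+1;d+1,c+2)$. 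By \eqref{shift on the parameters - hypergeometric weights} and \eqref{canonical matrix differential equation satisfied by the hypergeometric weights} one has $\Wvec^{(1)}=x\,\Phi(x)\,\Wvec$ and $\DiffOp\Wvec^{(1)}=-\Psi(x)\,\Wvec$. The shifted parameters satisfy \eqref{parameters} with $\delta$ replaced by $\delta+1>1$, so every component of $\Wvec^{(1)}$, hence every $Q_n(\cdot;a+1,b+1;d+1,c+2)$, vanishes at both endpoints $0$ and $1$; consequently integration by parts of $\int_0^1 x^k(\cdots)'\dx$ against these objects produces no boundary terms, and all the integrals below are legitimate since the original weights $\W_0,\W_1$ are integrable on $(0,1)$.

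To prove \eqref{differentiation formula for type II polynomials - hypergeometric weights}, observe that $\tfrac1{n+1}\DiffOp P_{n+1}(x;a,b;c,d)$ is monic of degree $n$. Since the step-line multi-indices are normal (Theorem \ref{Nikishin system}), it suffices to check that this polynomial satisfies the type II orthogonality conditions with respect to $\Wvec^{(1)}$ for the multi-index of length $n$. Integrating $\int_0^1 x^k\,\DiffOp P_{n+1}(\cdot;a,b;c,d)\,\W_j^{(1)}\dx$ by parts, the integrand becomes $-P_{n+1}(\cdot;a,b;c,d)$ times $kx^{k-1}\W_j^{(1)}+x^k\DiffOp\W_j^{(1)}$, and substituting $\W_j^{(1)}=\sum_i(x\Phi)_{ji}\W_i$ and $\DiffOp\W_j^{(1)}=-\sum_i\Psi_{ji}\W_i$ rewrites this as $\sum_i q_{ji}^{(k)}(x)\W_i(x)$ with each $q_{ji}^{(k)}$ a polynomial. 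A direct degree count, using the precise degree pattern of the entries of $\Phi(x)$ and $\Psi(x)$ in \eqref{matrix Phi in matrix diff eq}--\eqref{matrix Psi in matrix diff eq}, shows that for every $k$ in the range dictated by the multi-index of length $n$ one has $\deg q_{ji}^{(k)}\le n_i-1$, where $(n_0,n_1)$ is the multi-index of length $n+1$; hence each integral vanishes by the type II orthogonality of $P_{n+1}(\cdot;a,b;c,d)$. The constant $n+1$ is simply the leading coefficient of $\DiffOp P_{n+1}$.

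To prove \eqref{differentiation formula for type I polynomials - hypergeometric weights}, write $Q_n(\cdot;a+1,b+1;d+1,c+2)=(A_n,B_n)\cdot\Wvec^{(1)}=\big((A_n,B_n)\,x\Phi(x)\big)\cdot\Wvec$ and differentiate, obtaining $\DiffOp Q_n(\cdot;a+1,b+1;d+1,c+2)=\big((A_n',B_n')\,x\Phi(x)-(A_n,B_n)\Psi(x)\big)\cdot\Wvec=:\tilde A(x)\W_0+\tilde B(x)\W_1$. Using $\deg A_n\le\floor{\tfrac{n-1}{2}}$ and $\deg B_n\le\floor{\tfrac n2}-1$ together with the degree pattern of $\Phi,\Psi$, one checks $\deg\tilde A\le\floor{\tfrac n2}$ and $\deg\tilde B\le\floor{\tfrac{n+1}{2}}-1$, so $(\tilde A,\tilde B)$ has exactly the degrees of a type I pair for the index of length $n+1$. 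Finally, integration by parts yields $\int_0^1 x^k\,\DiffOp Q_n(\cdot;a+1,b+1;d+1,c+2)\dx=-k\int_0^1 x^{k-1}Q_n(\cdot;a+1,b+1;d+1,c+2)\dx$, which vanishes for $0\le k\le n-1$ and equals $-n$ for $k=n$; these are exactly the orthogonality and normalisation conditions characterising $-n\,Q_{n+1}(x;a,b;c,d)$, and normality of the step-line multi-indices gives \eqref{differentiation formula for type I polynomials - hypergeometric weights}.

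The main obstacle is the degree bookkeeping in the middle steps: the argument works only because the variable $x$ sits in front of $\Phi(x)$ in \eqref{shift on the parameters - hypergeometric weights} and appears in the $(2,1)$-entries of $\Phi$ and $\Psi$ in \eqref{matrix Phi in matrix diff eq}--\eqref{matrix Psi in matrix diff eq}, so that the polynomial multipliers produced by integration by parts stay within the degree windows absorbed by the orthogonality of the higher-index object. This is precisely the canonical normalisation demanded in \cite[Prop.~2.6~\&~2.7]{PaperTricomiWeights}; checking it is the content of Theorem \ref{canonical matrix differential equation satisfied by the hypergeometric weights - theorem}, which is why that theorem is phrased in exactly that form, and from there the two differentiation formulas follow by directly invoking those propositions with the shift identity $x\,\Phi(x)\Wvec(x;a,b;c,d)=\Wvec(x;a+1,b+1;d+1,c+2)$.
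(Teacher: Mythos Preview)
Your proposal is correct and follows essentially the same route as the paper: the paper's proof simply records that Theorem \ref{canonical matrix differential equation satisfied by the hypergeometric weights - theorem} places $\Wvec(x;a,b;c,d)$ in the setting of \cite[Prop.~2.6~\&~2.7]{PaperTricomiWeights}, invokes those propositions, and then uses the shift identity \eqref{shift on the parameters - hypergeometric weights} to identify $x\,\Phi(x)\Wvec(x;a,b;c,d)$ with $\Wvec(x;a+1,b+1;d+1,c+2)$. You do the same, additionally unpacking the integration-by-parts and degree-counting mechanism that those propositions encapsulate; this is a faithful expansion rather than a different argument.
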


\begin{proof}
Let $\dis\Phi(x)$ be defined by \eqref{matrix Phi in matrix diff eq} and denote $\dis\Wvec(x;a,b;c,d)$ by $\dis\Wvec(x)$.

Since $\dis\Wvec(x)$ satisfies the equation \eqref{canonical matrix differential equation satisfied by the hypergeometric weights} and on account of the degrees of the polynomial entries in the matrices $\Phi(x)$ and $\Psi(x)$, then Proposition 2.6 in \cite{PaperTricomiWeights} ensures the $2$-orthogonality of the polynomial sequence $\dis\polyseq[(n+1)^{-1}\,P'_{n+1}(x;a,b;c,d)]$ with respect to the vector of weights $\dis x\Phi(x)\Wvec(x)$ whilst  Proposition 2.7 in  \cite{PaperTricomiWeights} implies that, if $R_n(x)$ is the type I function for the index of length $n$ on the step line with respect to $\dis x\Phi(x)\Wvec(x)$, then $\dis -n^{-1}\,R'_n(x)$ is the type I function for the index of length $n+1$ on the step line with respect to the vector of weights $\dis\Wvec(x)$.

Therefore, by virtue of \eqref{shift on the parameters - hypergeometric weights}, we conclude that both \eqref{differentiation formula for type II polynomials - hypergeometric weights} and \eqref{differentiation formula for type I polynomials - hypergeometric weights} hold.
\end{proof}

%\newpage

\subsection{Type I multiple orthogonal polynomials}\label{Type I multiple orthogonal polynomials}
Due to the differential relation \eqref{differentiation formula for type I polynomials - hypergeometric weights}, the type I functions on the step line can be generated by concatenated differentiation of the weight function or, in other words, via a Rodrigues-type formula as it follows.
\begin{theorem}
\label{Rodrigues formula for the type I polynomials - theorem - hypergeometric weights}
For $a,b,c,d\in\R^+$ such that $\min\{c,d\}>\max\{a,b\}$ and $n\in\N$, let $Q_{n+1}(x;a,b;c,d)$ be the type I function for the index of length $n+1$ on the step line with respect to $\dis\Wvec(x;a,b;c,d)$.
Then
\begin{align}
\label{Rodrigues formula for the type I polynomials - hypergeometric weights}
\hspace{-.5cm}Q_{n+1}(x;a,b;c,d)
=\frac{(-1)^n}{n!}\DiffOpHigherOrder{n}
\left(\W\left(x;a+n,b+n;c+\floor{\frac{n+1}{2}}+n,d+\floor{\frac{n}{2}}+n\right)\right). 
\end{align}
\end{theorem}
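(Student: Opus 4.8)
The plan is to prove \eqref{Rodrigues formula for the type I polynomials - hypergeometric weights} by induction on $n$, using the differentiation formula \eqref{differentiation formula for type I polynomials - hypergeometric weights} of Theorem \ref{differentiation formulas for the multiple orthogonal polynomials - theorem - hypergeometric weights} as the inductive engine, and using the symmetry $\W(x;a,b;c,d)=\W(x;a,b;d,c)$ together with the elementary identity $\floor{\frac{n+2}{2}}=\floor{\frac{n}{2}}+1$ to reconcile the parameter shifts. Throughout, the existence and uniqueness of each type I function $Q_n$ is guaranteed by the Nikishin property established in Theorem \ref{Nikishin system}.

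For the base case $n=0$, the index of length $1$ on the step line is $(1,0)$, so $\deg B_1\le-1$ forces $B_1\equiv 0$ and $\deg A_1\le 0$ forces $A_1$ to be constant; hence $Q_1(x;a,b;c,d)=A_1\,\W(x;a,b;c,d)$. The only normalisation condition, $\int_0^1 Q_1(x;a,b;c,d)\,\dx=1$, together with $\int_0^1\W(x;a,b;c,d)\,\dx=1$ (the case $n=0$ of \eqref{moments of the hypergeometric weight}), gives $A_1=1$, so $Q_1(x;a,b;c,d)=\W(x;a,b;c,d)$; since $\floor{1/2}=\floor{0/2}=0$, this is exactly the right-hand side of \eqref{Rodrigues formula for the type I polynomials - hypergeometric weights} at $n=0$.

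For the inductive step, assume \eqref{Rodrigues formula for the type I polynomials - hypergeometric weights} at level $n$ for every admissible choice of parameters. Applying \eqref{differentiation formula for type I polynomials - hypergeometric weights} with index $n+1$ gives
\[
Q_{n+2}(x;a,b;c,d)=-\frac{1}{n+1}\,\DiffOp\big(Q_{n+1}(x;a+1,b+1;d+1,c+2)\big).
\]
The shifted parameters $(a+1,b+1,d+1,c+2)$ again satisfy \eqref{parameters} (the quantity $\delta$ merely increases by $1$, and $\min\{d+1,c+2\}>\max\{a+1,b+1\}$), so the inductive hypothesis applies to $Q_{n+1}(x;a+1,b+1;d+1,c+2)$; substituting it and combining the derivatives yields
\[
Q_{n+2}(x;a,b;c,d)=\frac{(-1)^{n+1}}{(n+1)!}\,\DiffOpHigherOrder{n+1}\left(\W\left(x;a+n+1,b+n+1;d+n+1+\floor{\frac{n+1}{2}},\,c+n+2+\floor{\frac{n}{2}}\right)\right).
\]
It remains to check this matches the right-hand side of \eqref{Rodrigues formula for the type I polynomials - hypergeometric weights} at level $n+1$, whose last two weight parameters are $c+n+1+\floor{\frac{n+2}{2}}$ and $d+n+1+\floor{\frac{n+1}{2}}$. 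Interchanging the last two arguments above via $\W(x;a,b;c,d)=\W(x;a,b;d,c)$, the desired equality reduces to $c+n+2+\floor{\frac{n}{2}}=c+n+1+\floor{\frac{n+2}{2}}$, that is, to $\floor{\frac{n+2}{2}}=\floor{\frac{n}{2}}+1$, which is immediate.

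The argument is essentially bookkeeping; the one genuine point is that the differentiation formula \eqref{differentiation formula for type I polynomials - hypergeometric weights} interchanges (in effect) the roles of the parameters $c$ and $d$, so two successive applications only produce a matching pattern once the symmetry of $\W$ in its last two arguments is invoked — which is exactly where the two floor functions, each advancing by one every second step, originate. The only auxiliary verification, carried out above, is that the constraints \eqref{parameters} persist under each shift, which holds because every shift raises $c$ and $d$ by at least as much as it raises $a$ and $b$.
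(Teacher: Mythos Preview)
Your proof is correct and follows essentially the same induction argument as the paper: the same base case $Q_1=\W$, the same use of \eqref{differentiation formula for type I polynomials - hypergeometric weights} in the inductive step, and the same bookkeeping with the symmetry $\W(x;a,b;c,d)=\W(x;a,b;d,c)$ and the identity $\floor{(n+2)/2}=\floor{n/2}+1$. The only difference is that you spell out the base case and the admissibility of the shifted parameters more explicitly than the paper does.
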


\begin{proof}
We proceed by induction on $n\in\N$.

For $n=0$, \eqref{Rodrigues formula for the type I polynomials - hypergeometric weights} reads as $\dis Q_1(x;a,b;c,d)=\W(x;a,b;c,d)$, which trivially holds.
	
Using \eqref{differentiation formula for type I polynomials - hypergeometric weights} and then evoking the assumption that \eqref{Rodrigues formula for the type I polynomials - hypergeometric weights} holds for a fixed $n\in\N$, we obtain
\begin{align*}
& Q_{n+2}(x;a,b;c,d)  
=-\frac{1}{n+1}\DiffOp\left(Q_{n+1}(x;a+1,b+1;d+1,c+2)\right) \\
&\quad
=\frac{(-1)^{n+1}}{(n+1)!}\DiffOpHigherOrder{n+1} \left(\W\left(x;a+1+n,b+1+n;d+1+\floor{\frac{n+1}{2}}+n,c+2+\floor{\frac{n}{2}}+n\right)\right) \\
&\quad
=\frac{(-1)^{n+1}}{(n+1)!}\DiffOpHigherOrder{n+1} \left(\W\left(x;a+n+1,b+n+1;c+\floor{\frac{n+2}{2}}+n+1,d+\floor{\frac{n+1}{2}}+n+1\right)\right).
\end{align*}
If we equate the first and latter members, we obtain \eqref{Rodrigues formula for the type I polynomials - hypergeometric weights} for $n+1$ and the result follows by induction.
\end{proof}

We continue with further properties reagrding   type I polynomials $(A_n,B_n)$ in \eqref{typeI An Bn} associated with the type I function $Q_n(x)$ in \eqref{Rodrigues formula for the type I polynomials - hypergeometric weights}.  
In fact, Theorem \ref{canonical matrix differential equation satisfied by the hypergeometric weights - theorem} combined with the proof of \cite[Prop.~2.7]{PaperTricomiWeights} leads to the following differential-difference relation between the pair of polynomials 
\[\left(A_{n+1}(x),B_{n+1}(x)\right):=\left(A_{n+1}(x;a,b;c,d),B_{n+1}(x;a,b;c,d)\right)\]
and 
\[\left(C_n(x),D_n(x)\right):=\left(A_n(x;a+1,b+1;d+2,c+1),B_n(x;a+1,b+1;d+2,c+1)\right), \]
the polynomials on the step line satisfying multiple orthogonality relations of type I with respect to $\dis\Wvec(x;a,b;c,d)$ and to $\dis\Wvec(x;a+1,b+1;d+1,c+2)$: 
\begin{subequations}
\begin{equation}
\label{recursive relation for type I polynomials wrt to hypergeometric weights 1}
A_{n+1}(x)
=\frac{c(c+1)d}{nab}\left(-\frac{1}{c-b}\left(b\,C_n(x)+x\,C'_n(x)\right)
+\frac{(d+1)x}{(b+1)(d-a)}\left(d\,D_n(x)+x\,D'_n(x)\right)\right)
\end{equation}
and
\begin{equation}
\label{recursive relation for type I polynomials wrt to hypergeometric weights 2}
B_{n+1}(x)
=\frac{(c+1)d}{na}\left(\frac{1}{c-b}\left(c\,C_n(x)+\frac{x}{b}\,C'_n(x)\right)
-\frac{d+1}{(b+1)(d-a)}\left(a\,D_n(x)+c\,x\,D'_n(x)\right)\right)
\end{equation}
\end{subequations}
which hold for all $n\geq 1$. 

Formulas \eqref{recursive relation for type I polynomials wrt to hypergeometric weights 1}-\eqref{recursive relation for type I polynomials wrt to hypergeometric weights 2} can be used to recursively generate type I polynomials with respect to $\dis\Wvec(x;a,b;c,d)$. The latter can be written as follows
\[
\frac{n a}{c(c+1)d}
\left[ \begin {array}{cc}
	b &0\\
	0&c 
\end{array}\right] 
\twovector{ A_{n+1}(x)}{ B_{n+1}(x)}
=  
\left[ \begin {array}{cc}
	b & -d x\\
	-c & a 
\end{array}\right] M 
\twovector{ C_{n}(x)}{ D_{n}(x)}
+ \ 
x \,\left[ \begin {array}{cc}
	1 & -x\\
	-1/b & c 
\end{array}\right] M
\twovector{ C_{n}'(x)}{ D_{n}'(x)}
\]
where
\[
M=\left[ \begin {array}{cc} 
	-{\frac {1}{c-b}}		& 0 
\\ 
	 0	&-{\frac { \left( d+1 \right) }{\left( b+1 \right)  \left( d-a \right) }}\end {array} \right] 
\]
or, equivalently, 
\[
\frac{n a b   }{(c+1)d}
\twovector{ A_{n+1}(x)}{ B_{n+1}(x)}
=   
\left[ \begin {array}{cc} bc&-cdx\\ \noalign{\medskip}-bc&ba
\end {array} \right] 
M 
\twovector{ C_{n}(x)}{ D_{n}(x)}
+ \ 
x \,
 \left[ \begin {array}{cc} c&-cx\\ \noalign{\medskip}-1&bc\end {array}
 \right] 
M
\twovector{ C_{n}'(x)}{ D_{n}'(x)}. 
\]
Thus, the type I polynomials can be generated by the rising operator 
\begin{align*}
\mathcal{O}(a,b;c,d)
=\frac{(c+1)d}{ab}\left(
\left[\begin {array}{cc} -\dfrac{bc}{c-b}&\dfrac{cdx}{c-b}\\ \noalign{\medskip}\dfrac{bc}{c-b}&-\dfrac{ab}{c-b}\end {array} \right] 
+\left[\begin{array}{cc}-\dfrac{c(d+1)}{(b+1)(d-a)}c&\dfrac{c(d+1)x}{(b+1)(d-a)}\\\noalign{\medskip}\dfrac{(d+1)}{(b+1)(d-a)}&\dfrac{-bc(d+1)}{(b+1)(d-a)}\end{array}\right]\,x\,\partial_x\right),
\end{align*}
since we have 
\begin{align*}
\twovector{ A_{n+1}(x;a,b;c,d)}{B_{n+1}(x;a,b;c,d)}
=\frac{1}{n}\,\mathcal{O}\twovector{ A_n(x;a+1,b+1;d+2,c+1)}{B_n(x;a+1,b+1;d+2,c+1)}. 
\end{align*}
As a result, we obtain the matrix Rodrigues-type formula for type I polynomials
\begin{align*}
\twovector{A_{n+1}(x;a,b;c,d)}{B_{n+1}(x;a,b;c,d)}
=\frac{1}{n!}\left(\prod_{k=0}^{n-1}\mathcal{O}(a_k,b_k;c_k,d_k)\right)\twovector{1}{0},
\quad n\in\N,
\end{align*}
where, as usual, the product of differential operators is understood as the composition, and the parameters involved are as follows 
$a_k=a+k$, $b_k=b+k$, $c_{2j}=c+3j$ and $c_{2j+1}=d+3j+2$, $d_{2j}=d+3j$ and $d_{2j+1}=c+3j+1$.

\section{Characterisation of the type II polynomials}
\label{Type II MOPs wrt hypergeometric weights}

The type II multiple orthogonal polynomials on the step line are described in detail here.
This characterisation includes: their explicit expression in Theorem \ref{explicit formulas for the 2-orthogonal polynomials - hypergeometric weights}, a third order linear differential equation with polynomial coefficients in Theorem \ref{differential equation satisfied by 2-OPS - hypergeometric weights}, a third order recurrence in Theorem \ref{recurrence relation satisfied by the 2-OPS - 1st theorem}. 
The asymptotic properties of these polynomials are analysed in \S \ref{Jacobi-Pineiro polynomials and asymptotic results}, which coincide with those observed for Jacobi-Pi\~neiro polynomials. 
We give the ratio asymptotics of two consecutive polynomials, the limiting zero distribution as well as a Mehler-Heine formula for the behaviour near the endpoint $0$. At last, we analyse particular realisations of these polynomials. 
Namely, the connection to Jacobi-type $2$-orthogonal polynomials, in \S \ref{Jacobi-type 2-OPS and sequence with constant recurrence relation coefficients}, and the connection to the cubic components of Hahn-classical threefold symmetric polynomials in \S \ref{Hahn-classical 3-fold symmetric 2-OPS}, where we also describe confluence relations to another (Hahn-classical) polynomials that are $2$-orthogonal with respect to weights involving the  confluent hypergeometric functions of the second kind. 

\subsection{Explicit expression}
\label{explicit expression}
Based on the moments expression \eqref{moments of the hypergeometric weight}, we deduce an explicit representation for the type II multiple orthogonal polynomials on the step line with respect to $\dis\Wvec(x;a,b;c,d)$ as generalised hypergeometric series. 

\begin{theorem}
\label{explicit formulas for the 2-orthogonal polynomials - hypergeometric weights}
For $a,b,c,d\in\R^+$ such that $\min\{c,d\}>\max\{a,b\}$, let $\dis\polyseq[P_n(x):=P_n(x;a,b;c,d)]$ be the monic $2$-orthogonal polynomial sequence with respect to $\dis\Wvec(x;a,b;c,d)$. Then
\begin{align}
\label{explicit formula for the 2-orthogonal polynomials as a 3F2}
P_n(x)=
\frac{(-1)^n\pochhammer{a}\pochhammer{b}}{\pochhammer{c+\floor{\frac{n}{2}}}\pochhammer{d+\floor{\frac{n-1}{2}}}}
\,\threeFtwo{-n,c+\floor{\frac{n}{2}},d+\floor{\frac{n-1}{2}}}{a,b}.
\end{align}
\end{theorem}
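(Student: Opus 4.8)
## Proof proposal

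The plan is to verify directly that the right-hand side of \eqref{explicit formula for the 2-orthogonal polynomials as a 3F2} is a monic polynomial of degree $n$ satisfying the $2$-orthogonality conditions \eqref{2-orthogonality conditions} with respect to the pair $\big[\W(x;a,b;c,d),\W(x;a,b+1;c+1,d)\big]$; by uniqueness of the type II polynomial on the step line (which follows from the Nikishin property established in Theorem \ref{Nikishin system}), this identifies it with $P_n(x)$. First I would expand the terminating ${}_3F_2$ using \eqref{generalised hypergeometric series}, writing
\begin{align*}
\threeFtwo{-n,c+\floor{\frac{n}{2}},d+\floor{\frac{n-1}{2}}}{a,b}
=\sum_{j=0}^{n}\frac{\pochhammer[j]{-n}\pochhammer[j]{c+\floor{\frac{n}{2}}}\pochhammer[j]{d+\floor{\frac{n-1}{2}}}}{\pochhammer[j]{a}\pochhammer[j]{b}}\frac{x^j}{j!},
\end{align*}
so the claimed expression is manifestly a polynomial of degree at most $n$; checking that the $j=n$ coefficient times the prefactor equals $1$ (using $\pochhammer[n]{-n}=(-1)^n n!$ and telescoping the Pochhammer ratios) gives monicity and degree exactly $n$.

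The heart of the argument is the orthogonality check, and the natural tool is the moment formula \eqref{moments of the hypergeometric weight}. For the first weight $\W(x;a,b;c,d)$ I would compute, for $0\le k\le n_0-1$,
\begin{align*}
\int_0^1 x^k P_n(x)\,\W(x;a,b;c,d)\dx
=\frac{(-1)^n\pochhammer{a}\pochhammer{b}}{\pochhammer{c+\floor{\frac{n}{2}}}\pochhammer{d+\floor{\frac{n-1}{2}}}}
\sum_{j=0}^{n}\frac{\pochhammer[j]{-n}\pochhammer[j]{c+\floor{\frac{n}{2}}}\pochhammer[j]{d+\floor{\frac{n-1}{2}}}}{\pochhammer[j]{a}\pochhammer[j]{b}\,j!}\,\frac{\pochhammer[k+j]{a}\pochhammer[k+j]{b}}{\pochhammer[k+j]{c}\pochhammer[k+j]{d}},
\end{align*}
and similarly against the second weight $\W(x;a,b+1;c+1,d)$, whose moments are $\pochhammer[m]{a}\pochhammer[m]{b+1}/\big(\pochhammer[m]{c+1}\pochhammer[m]{d}\big)$. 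Using $\pochhammer[k+j]{a}=\pochhammer[k]{a}\pochhammer[j]{a+k}$ to split off the $k$-dependent factors, the inner sum collapses to a Saalschützian (balanced) ${}_3F_2$ evaluated at $1$: after cancellation of $\pochhammer[j]{a},\pochhammer[j]{b}$ one is left with $\threeFtwoOneLine[1]{-n,\,c+\floor{n/2},\,d+\floor{(n-1)/2}}{c+k,\,d+k}$ (and an analogous expression with shifted parameters for the second weight). I would then invoke the Pfaff–Saalschütz theorem, which evaluates such a series as a ratio of Pochhammer symbols and shows it vanishes precisely when the "excess" parameter is a non-positive integer in the relevant range — this is exactly the mechanism forcing the integral to be zero for $k$ up to $n_0-1$ (resp. $n_1-1$) and non-zero at the boundary index $k=n_j$.

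The main obstacle, and the step requiring genuine care rather than routine manipulation, is bookkeeping the two floor functions $\floor{n/2}$ and $\floor{(n-1)/2}$ and matching the two cases $n=2m$ and $n=2m+1$ against the step-line multi-indices $(m,m)$ and $(m+1,m)$: one must check that the Saalschütz evaluation produces a zero for exactly the right ranges of $k$ for \emph{each} of the two weights separately, and that the count of independent vanishing conditions is $n_0+n_1=n$, so that together with monicity the polynomial is pinned down. An alternative, possibly cleaner, route avoids Pfaff–Saalschütz entirely: one can verify \eqref{explicit formula for the 2-orthogonal polynomials as a 3F2} by induction on $n$ using the differentiation formula \eqref{differentiation formula for type II polynomials - hypergeometric weights} from Theorem \ref{differentiation formulas for the multiple orthogonal polynomials - theorem - hypergeometric weights}, since differentiating the ${}_3F_2$ term by term lowers $-n$ to $-(n-1)$ and shifts $a,b,c,d\mapsto a+1,b+1,d+1,c+2$, and the floor functions transform correctly under $n\mapsto n+1$; this reduces the whole claim to checking the base cases $P_0=1$, $P_1$, and fixing the integration constant at each step via a single orthogonality relation against the weight of lowest degree. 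I would present the hypergeometric-evaluation proof as the main argument and remark on the inductive shortcut.
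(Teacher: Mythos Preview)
Your overall strategy is the same as the paper's --- integrate $x^k P_n(x)$ against each weight using the moment formula and recognise the resulting sum as a terminating hypergeometric series at $1$ --- but the crucial step is wrong. After splitting $\pochhammer[k+j]{a}=\pochhammer[k]{a}\pochhammer[j]{a+k}$ (and likewise for $b,c,d$), the summand carries the ratio
\[
\frac{\pochhammer[j]{-n}\,\pochhammer[j]{c+\floor{n/2}}\,\pochhammer[j]{d+\floor{(n-1)/2}}\,\pochhammer[j]{a+k}\,\pochhammer[j]{b+k}}
{\pochhammer[j]{a}\,\pochhammer[j]{b}\,\pochhammer[j]{c+k}\,\pochhammer[j]{d+k}\,j!},
\]
and the factors $\pochhammer[j]{a+k}/\pochhammer[j]{a}$ and $\pochhammer[j]{b+k}/\pochhammer[j]{b}$ do \emph{not} cancel for $k>0$. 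The inner sum is therefore a ${}_5F_4$ at $1$, not a ${}_3F_2$, and Pfaff--Saalsch\"utz does not apply. (Even formally, the series $\threeFtwoOneLine[1]{-n,\,c+\floor{n/2},\,d+\floor{(n-1)/2}}{c+k,\,d+k}$ you wrote down is not balanced, so Pfaff--Saalsch\"utz would fail anyway.)

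The paper handles exactly this ${}_5F_4$ using a different identity: the numerator parameters $a+k$, $b+k+j$, $c+\floor{n/2}$, $d+\floor{(n-1)/2}$ are \emph{integer shifts} of the denominator parameters $a$, $b$, $c+k+j$, $d+k$, with total shift $n-1<n$ in the vanishing range, so one invokes the Karlsson--Minton formula (and its limit, Lemma \ref{explicit formulas for hypergeometric series} in the paper) rather than Pfaff--Saalsch\"utz. This is the missing key lemma; once you have it, both the vanishing for $n\ge 2k+j+1$ and the explicit non-zero values at $n=2k+j$ drop out immediately. Your alternative inductive route via the differentiation formula \eqref{differentiation formula for type II polynomials - hypergeometric weights} is sound --- the paper itself remarks that this would work --- but the ``main argument'' as you have written it has a genuine gap at the hypergeometric evaluation.
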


By definition of the generalised hypergeometric series, the latter formula is equivalent to
\begin{align}
\label{expansion of the 2-OPS over the canonical basis - hypergeometric weights}
P_n(x)=\sum_{j=0}^{n}\tau_{n,j}x^{n-j},
\quad \text{with}\quad
\tau_{n,j}=%\tau_{n,j}(a,b;c,d)=
\binom{n}{j}\frac{(-1)^{j}\pochhammer[j]{a+n-j}\pochhammer[j]{b+n-j}}
{\pochhammer[j]{c+\floor{\frac{n}{2}}+n-j}\pochhammer[j]{d+\floor{\frac{n-1}{2}}+n-j}}.
\end{align}

To prove Theorem \ref{explicit formulas for the 2-orthogonal polynomials - hypergeometric weights} we need to show that the sequence $\polyseq$ defined by \eqref{explicit formula for the 2-orthogonal polynomials as a 3F2} satisfies the $2$-orthogonality conditions with respect to $\dis\Wvec(x;a,b;c,d)$, that is, we need to check that, for each $j\in\{0,1\}$,
%\begin{subequations}
\begin{align}
\label{orthogonality conditions hypergeometric weights}
\int_{0}^{1}x^kP_n(x)\W(x;a,b+j;c+j,d)\dx
=\begin{cases}
0, &\text{ if } n\geq 2k+j+1,\\
N_n(a,b;c,d)\neq 0, &\text{ if } n=2k+j.
\end{cases}
\end{align}
Actually, as we are dealing with a Nikishin system, the existence of a $2$-orthogonal polynomial sequence with respect to $\dis\Wvec(x;a,b;c,d)$ is guaranteed. 
By virtue of the generalised hypergeometric differential equation \eqref{generalised hypergeometric differential equation}, it is rather straightforward to show that the polynomials given by \eqref{explicit formula for the 2-orthogonal polynomials as a 3F2} satisfy the differential property \eqref{differentiation formula for type II polynomials - hypergeometric weights} stated in Theorem \ref{differentiation formulas for the multiple orthogonal polynomials - theorem - hypergeometric weights}. 
A property that a $2$-orthogonal polynomial sequence with respect to $\dis\Wvec(x;a,b;c,d)$ must satisfy. 
Therefore, it would be sufficient to check the orthogonality conditions \eqref{orthogonality conditions hypergeometric weights} when $k=0$ to then prove the result by induction on $n\in\N$ (the degree of the polynomials).
However, we opt for checking that the polynomials $P_n(x)$ in \eqref{explicit formula for the 2-orthogonal polynomials as a 3F2} satisfy all the orthogonality conditions \eqref{orthogonality conditions hypergeometric weights}. 
On the one hand, this process enables us to show directly that the polynomials in  \eqref{explicit formula for the 2-orthogonal polynomials as a 3F2} are indeed $2$-orthogonal with respect to $\dis\Wvec(x;a,b;c,d)$ without arguing with the Nikishin property. On the other hand, it provides a method to derive explicit expressions for the nonzero coefficients $N_n(a,b;c,d)$ in \eqref{orthogonality conditions hypergeometric weights} which are used in Subsection \ref{recurrence relation} to obtain explicit expressions for the nonzero $\gamma$-coefficients in the third order recurrence relation \eqref{recurrence relation for a 2-OPS} satisfied by these polynomials.

To compute the integrals in \eqref{orthogonality conditions hypergeometric weights}, we use the following auxiliary lemma, which can be found in \cite[Lemma 3.2]{PaperTricomiWeights}.
As mentioned therein, \eqref{Minton's formula} was deduced in \cite{Minton} and \eqref{hypergeometric formula 1} can be obtained by taking the limit $\beta\to +\infty$ in \eqref{Minton's formula}.

%\newpage 
\begin{lemma}
\label{explicit formulas for hypergeometric series}
Let $n$, $p$, and $m_1,\cdots,m_p$ be positive integers such that $\dis m:=\sum_{i=1}^{p}m_i\leq n$ and $\beta,f_1,\cdots,f_p$ be complex numbers with positive real part.
Then
\begin{align}
\label{hypergeometric formula 1}
\Hypergeometric[1]{p+1}{p}{-n,f_1+m_1,\cdots,f_p+m_p}{f_1,\cdots,f_p}
=\begin{cases}
0 & \text{ if } m<n,\\
\dis\frac{(-1)^n n!}{\pochhammer[m_1]{f_1}\cdots\pochhammer[m_p]{f_p}} & \text{ if } m=n.
\end{cases}
\end{align}
and
\begin{align}
\label{Minton's formula}
\Hypergeometric[1]{p+2}{p+1}{-n,\beta,f_1+m_1,\cdots,f_p+m_p}{\beta+1,f_1,\cdots,f_p}=
\frac{n!\pochhammer[m_1]{f_1-\beta}\cdots\pochhammer[m_p]{f_p-\beta}}
{\pochhammer{\beta+1}\pochhammer[m_1]{f_1}\cdots\pochhammer[m_p]{f_p}}.
\end{align}
\end{lemma}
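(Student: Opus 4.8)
The plan is to reduce both summations to elementary finite-difference calculus; this is the classical route (the second identity being Minton's summation). The single observation driving everything is that, for a complex number $f$ with $\Real f>0$ and a positive integer $m$, the ratio $\dis\pochhammer[\ell]{f+m}/\pochhammer[\ell]{f}$ coincides with $\dis\pochhammer[m]{f+\ell}/\pochhammer[m]{f}$, hence is a polynomial in the summation index $\ell$, of degree exactly $m$, with leading coefficient $\dis1/\pochhammer[m]{f}$. Consequently $\dis P(\ell):=\prod_{i=1}^{p}\frac{\pochhammer[\ell]{f_i+m_i}}{\pochhammer[\ell]{f_i}}=\prod_{i=1}^{p}\frac{\pochhammer[m_i]{f_i+\ell}}{\pochhammer[m_i]{f_i}}$ is a polynomial in $\ell$ of degree $\dis m=\sum_{i=1}^p m_i$ with leading coefficient $\dis1/\prod_{i=1}^p\pochhammer[m_i]{f_i}$ and with $\dis P(-\beta)=\prod_{i=1}^{p}\pochhammer[m_i]{f_i-\beta}/\pochhammer[m_i]{f_i}$. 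I would also record the two standard facts $\dis\pochhammer[\ell]{-n}/\ell!=(-1)^\ell\binom{n}{\ell}$ and $\dis\sum_{\ell=0}^{n}(-1)^\ell\binom{n}{\ell}g(\ell)=(-1)^n(\Delta^n g)(0)$, where $\Delta$ denotes the forward difference, together with the elementary fact that $\Delta^n$ annihilates every polynomial of degree $<n$ and maps a polynomial with leading term $a\,\ell^n$ to the constant $a\,n!$.

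With these in hand, \eqref{hypergeometric formula 1} is immediate. Since the series terminates at $\ell=n$, its left-hand side equals $\dis\sum_{\ell=0}^{n}\frac{\pochhammer[\ell]{-n}}{\ell!}\,P(\ell)=(-1)^n(\Delta^n P)(0)$. If $m<n$ then $\deg P<n$, so this vanishes; if $m=n$ then $\deg P=n$ with leading coefficient $\dis1/\prod_i\pochhammer[m_i]{f_i}$, whence $\dis(\Delta^n P)(0)=n!/\prod_i\pochhammer[m_i]{f_i}$ and the asserted value follows.

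For \eqref{Minton's formula} there is the extra ratio $\dis\pochhammer[\ell]{\beta}/\pochhammer[\ell]{\beta+1}=\beta/(\beta+\ell)$, so the left-hand side equals $\dis\beta\sum_{\ell=0}^{n}(-1)^\ell\binom{n}{\ell}\frac{P(\ell)}{\beta+\ell}$. Writing $\dis P(\ell)=(\ell+\beta)\,Q(\ell)+P(-\beta)$ by the remainder theorem splits this into a contribution from the polynomial $Q$, which has degree $m-1<n$ and therefore contributes $0$ after applying $\Delta^n$, plus the term $\dis\beta\,P(-\beta)\sum_{\ell=0}^{n}(-1)^\ell\binom{n}{\ell}\frac{1}{\beta+\ell}$. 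The remaining sum is $(-1)^n$ times the $n$-th difference, evaluated at $0$, of $\ell\mapsto1/(\beta+\ell)$, and a telescoping computation gives that difference as $\dis(-1)^n n!/\pochhammer[n+1]{\beta}$. Collecting the factors, the left-hand side becomes $\dis n!\,P(-\beta)/\pochhammer[n]{\beta+1}$, which is exactly the stated expression once $P(-\beta)$ is written out. As the text remarks, one may instead recover \eqref{hypergeometric formula 1} from \eqref{Minton's formula} by letting $\beta\to+\infty$, since then $\beta/(\beta+\ell)\to1$ while the right-hand side, being of order $\beta^{m-n}$, tends to $0$ when $m<n$ and to $(-1)^n n!/\prod_i\pochhammer[m_i]{f_i}$ when $m=n$.

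There is no genuine obstacle: the sole idea is the recognition that the shifted-over-base Pochhammer ratios are polynomials in the summation index, after which everything follows from how $\Delta^n$ acts on polynomials and on $\dis1/(\beta+\ell)$. The only points demanding care are keeping track of the signs in the finite-difference identity and the telescoping evaluation of the $n$-th difference of $\dis1/(\beta+\ell)$; the hypotheses $\Real f_i>0$ and $\Real\beta>0$ play no structural role beyond ensuring that the series and the Pochhammer symbols appearing in the statement are well defined.
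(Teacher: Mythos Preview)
Your argument is correct. The key observation that $\pochhammer[\ell]{f+m}/\pochhammer[\ell]{f}=\pochhammer[m]{f+\ell}/\pochhammer[m]{f}$ is a polynomial of degree $m$ in $\ell$ is exactly what makes both sums collapse via the $n$-th forward difference, and your computation of $(\Delta^n)(\ell\mapsto 1/(\beta+\ell))(0)=(-1)^n n!/\pochhammer[n+1]{\beta}$ is the right closed form. The split $P(\ell)=(\ell+\beta)Q(\ell)+P(-\beta)$ is valid with $\deg Q=m-1<n$ thanks to the hypothesis $m\leq n$, so the polynomial piece dies and only the explicit remainder term survives.

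As for comparison with the paper: the paper does not actually prove this lemma. It simply cites \cite[Lemma~3.2]{PaperTricomiWeights} for the statement, attributes \eqref{Minton's formula} to Minton's original paper, and remarks that \eqref{hypergeometric formula 1} follows from \eqref{Minton's formula} by letting $\beta\to+\infty$ (a limit you also mention at the end of your write-up). So your contribution is strictly more: you supply a short, self-contained derivation of both identities from finite-difference calculus, whereas the paper treats the lemma as a quoted external result. The advantage of your route is that it requires no outside reference and makes transparent why the condition $m\leq n$ is exactly what is needed.
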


\begin{proof}[Proof of Theorem \ref{explicit formulas for the 2-orthogonal polynomials - hypergeometric weights}.] 
Recalling the explicit expression for $P_n(x)$ given by \eqref{explicit formula for the 2-orthogonal polynomials as a 3F2}, and the definition of the generalised hypergeometric series \eqref{generalised hypergeometric series}, we get, for both $j\in\{0,1\}$ and any $k,n\in\N$,
\begin{align*}
&\int_{0}^{1}x^kP_n(x)\W(x;a,b+j;c+j,d)\dx \\
=&\,\frac{(-1)^n\pochhammer{a}\pochhammer{b}}{\pochhammer{c+\floor{\frac{n}{2}}}\pochhammer{d+\floor{\frac{n-1}{2}}}} \,\sum_{i=0}^{n}\frac{\pochhammer[i]{-n}\pochhammer[i]{c+\floor{\frac{n}{2}}}\pochhammer[i]{d+\floor{\frac{n-1}{2}}}} {i!\pochhammer[i]{a}\pochhammer[i]{b}}\int_{0}^{1}x^{k+i}\W(x;a,b+j;c+j,d)\dx.
\end{align*}
Moreover, using the formula for the moments of the hypergeometric weight \eqref{moments of the hypergeometric weight},
\begin{align*}
&\int_{0}^{1}x^kP_n(x)\W(x;a,b+j;c+j,d)\dx \\
=&\,\frac{(-1)^n\pochhammer{a}\pochhammer{b}}{\pochhammer{c+\floor{\frac{n}{2}}}\pochhammer{d+\floor{\frac{n-1}{2}}}} \,\sum_{i=0}^{n}\frac{\pochhammer[i]{-n}\pochhammer[i]{c+\floor{\frac{n}{2}}}\pochhammer[i]{d+\floor{\frac{n-1}{2}}}} {i!\pochhammer[i]{a}\pochhammer[i]{b}} \frac{\pochhammer[k+i]{a}\pochhammer[k+i]{b+j}}{\pochhammer[k+i]{c+j}\pochhammer[k+i]{d}}.
\end{align*}
Therefore, recalling again the definition of the generalised hypergeometric series \eqref{generalised hypergeometric series}, we have
\begin{align}
\label{orthogonality conditions proof first integral}
\begin{split}
&\int_{0}^{1}x^kP_n(x)\W(x;a,b+j;c+j,d)\dx \\
=&\frac{(-1)^n\pochhammer{a}\pochhammer{b}\pochhammer[k]{a}\pochhammer[k]{b+j}}
{\pochhammer{c+\floor{\frac{n}{2}}}\pochhammer{d+\floor{\frac{n-1}{2}}}\pochhammer[k]{c+j}\pochhammer[k]{d}}
\,\Hypergeometric[1]{5}{4}{-n,a+k,b+k+j,c+\floor{\frac{n}{2}},d+\floor{\frac{n-1}{2}}}{a,b,c+k+j,d+k}.
\end{split}
\end{align}
For any $n\in\N$, $\dis\floor{\frac{n}{2}}+\floor{\frac{n-1}{2}}=n-1$ and if $n\geq 2k+j+1$, $j\in\{0,1\}$, then $\dis\floor{\frac{n}{2}}\geq k+j$ and $\dis\floor{\frac{n-1}{2}}\geq k$.
Therefore, using \eqref{hypergeometric formula 1} in Lemma \ref{explicit formulas for hypergeometric series}, we deduce that, for both $j\in\{0,1\}$,
%\begin{align*}
%\Hypergeometric[1]{5}{4}{-n,a+k,b+k,c+\floor{\frac{n}{2}},d+\floor{\frac{n-1}{2}}}{a,b,c+k,d+k}=0,
%\quad \text{for any }\quad n\geq 2k+1.
%\end{align*}
%and
\begin{align*}
\Hypergeometric[1]{5}{4}{-n,a+k,b+k+j,c+\floor{\frac{n}{2}},d+\floor{\frac{n-1}{2}}}{a,b,c+k+j,d+k}=0,
\quad \text{for any }\quad n\geq 2k+j+1,
\end{align*}
and, as a result,
%\begin{equation}
%\int_{0}^{1}x^kP_n(x)\W(x;a,b;c,d)\dx = 0,
%\quad \text{for any }\quad n\geq 2k+1. 
%\end{equation}
%and
\begin{equation}
\int_{0}^{1}x^kP_n(x)\W(x;a,b+j;c+j,d)\dx=0,
\quad \text{for any }\quad n\geq 2k+j+1.
\end{equation}
Taking $j=0$ and $n=2k$ in \eqref{orthogonality conditions proof first integral}, 
\begin{align*}
\int_{0}^{1}x^kP_{2k}(x)\W(x;a,b;c,d)\dx
=\frac{\pochhammer[2k]{a}\pochhammer[2k]{b}\pochhammer[k]{a}\pochhammer[k]{b}}
{\pochhammer[3k]{c}\pochhammer[3k-1]{d}(d+k-1)}
\,\Hypergeometric[1]{4}{3}{-2k,a+k,b+k,d+k-1}{a,b,d+k}.
\end{align*}
and, using \eqref{Minton's formula}, we get 
\begin{align*}
\Hypergeometric[1]{4}{3}{-2k,a+k,b+k,d+k-1}{a,b,d+k} 
=\frac{(2k)!\pochhammer[k]{a-d+1-k}\pochhammer[k]{b-d+1-k}}
{\pochhammer[2k]{d+k}\pochhammer[k]{a}\pochhammer[k]{b}}
=\frac{(2k)!\pochhammer[k]{d-a}\pochhammer[k]{d-b}}
{\pochhammer[k]{a}\pochhammer[k]{b}\pochhammer[2k]{d+k}}.
\end{align*}
Therefore,
\begin{align}
\label{nonzero integral first weight}
\int_{0}^{1}x^kP_{2k}(x)\W(x;a,b;c,d)\dx
=\frac{(2k)!\pochhammer[2k]{a}\pochhammer[2k]{b}\pochhammer[k]{d-a}\pochhammer[k]{d-b}}
{\pochhammer[3k]{c}\pochhammer[3k]{d}\pochhammer[2k]{d+k-1}}>0,
\end{align}
and \eqref{orthogonality conditions hypergeometric weights} holds for any $k,n\in\N$ when $j=0$.

Similarly, taking $j=1$ and $n=2k+1$ in \eqref{orthogonality conditions proof first integral}, 
\begin{align*}
\int_{0}^{1}x^kP_{2k+1}(x)\W(x;a,b+1;c+1,d)\dx
=-\frac{\pochhammer[2k+1]{a}\pochhammer[2k+1]{b}\pochhammer[k]{a}\pochhammer[k]{b+1}}
{\pochhammer[3k]{c+1}(c+k)\pochhammer[3k+1]{d}}
\,\Hypergeometric[1]{4}{3}{-2k-1,a+k,b+k+1,c+k}{a,b,c+k+1}.
\end{align*}
and, using again \eqref{Minton's formula}, we get 
\begin{align*}
\Hypergeometric[1]{4}{3}{-2k-1,a+k,b+k+1,c+k}{a,b,c+k+1}
%&=\frac{(2k+1)!\pochhammer[k]{a-c-k}\pochhammer[k+1]{b-c-k}}
%{\pochhammer[2k+1]{c+k+1}\pochhammer[k]{a}\pochhammer[k+1]{b}} \\&
=-\frac{(2k+1)!\pochhammer[k]{c-a+1}\pochhammer[k+1]{c-b}}
{\pochhammer[k]{a}\pochhammer[k+1]{b}\pochhammer[2k+1]{c+k+1}},
\end{align*}
so that
\begin{align}
\label{nonzero integral second weight}
\int_{0}^{1}x^kP_{2k+1}(x)\W(x;a,b+1;c+1,d)\dx
=\frac{(2k+1)!\pochhammer[2k+1]{a}\pochhammer[2k]{b+1}\pochhammer[k]{c-a+1}\pochhammer[k+1]{c-b}}
{\pochhammer[3k+1]{c+1}\pochhammer[2k+1]{c+k}\pochhammer[3k+1]{d}}>0.
\end{align}
ensuring that \eqref{orthogonality conditions hypergeometric weights} also holds for any $k,n\in\N$ when $j=1$.
\end{proof}

%%%%%

\subsection{Differential equation}
\label{differential equation}
The type II multiple orthogonal polynomials of hypergeometric type described in \eqref{explicit formula for the 2-orthogonal polynomials as a 3F2} are solutions to the following third order differential equation.
\begin{theorem}
\label{differential equation satisfied by 2-OPS - hypergeometric weights}
For $a,b,c,d\in\R^+$ such that $\min\{c,d\}>\max\{a,b\}$, let $\dis\polyseq[P_n(x):=P_n(x;a,b;c,d)]$ be the monic $2$-orthogonal polynomial sequence with respect to $\dis\Wvec(x;a,b;c,d)$. 
Then
\begin{align}
\label{differential equation satisfied by the 2-orthogonal polynomials - hypergeometric weights}
x^2(1-x)P'''_n(x)-x\varphi(x)P''_n(x)+\psi_n(x)P'_n(x)+n\lambda_nP_n(x)=0, 
\end{align}
with 
\begin{align*}
&\varphi(x)=(c+d+2)x-(a+b+1), 
\\
&\psi_n(x)=\big((n-1)(c+d+n)-\lambda_n\big)x+ab,
\\
&\lambda_n=\left(c+\floor{\frac{n}{2}}\right)\left(d+\floor{\frac{n-1}{2}}\right).
\end{align*}
\end{theorem}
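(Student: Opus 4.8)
The plan is to start from the explicit hypergeometric representation \eqref{explicit formula for the 2-orthogonal polynomials as a 3F2} and feed it into the generalised hypergeometric differential equation \eqref{generalised hypergeometric differential equation} with $(p,q)=(3,2)$. Writing $P_n(x)=c_n\,\threeFtwoOneLine{-n,c+\floor{n/2},d+\floor{n-1/2}}{a,b}$ with the constant $c_n=\frac{(-1)^n(a)_n(b)_n}{(c+\floor{n/2})_n(d+\floor{n-1/2})_n}$, the function $F(x)={}_3F_2\!\left(-n,c+\floor{n/2},d+\floor{n-1/2};a,b;x\right)$ satisfies, by \eqref{generalised hypergeometric differential equation},
\begin{align*}
\left[\left(x\DiffOp+a-1\right)\left(x\DiffOp+b-1\right)\DiffOp\right]F(x)
=\left[\left(x\DiffOp-n\right)\left(x\DiffOp+c+\floor{\tfrac{n}{2}}\right)\left(x\DiffOp+d+\floor{\tfrac{n-1}{2}}\right)\right]F(x).
\end{align*}
The first step is to expand both sides using the Euler operator identities $(x\DiffOp)x^k=kx^k$ and the rules $\left(x\DiffOp+\mu\right)x^k=(k+\mu)x^k$, equivalently rewriting each side as a genuine differential operator in $\DiffOp$, $x\DiffOp$, and $x\DiffOpHigherOrder{2}$, etc. Since $P_n=c_nF$, the same equation holds verbatim with $F$ replaced by $P_n$.

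The second step is routine bookkeeping: collect the two sides into the form $x^2(1-x)P_n'''(x)+(\text{cubic in }x\DiffOp\text{ of degree }\le 2)\,P_n + \dots$. Concretely, the right-hand side $\left(x\DiffOp-n\right)\left(x\DiffOp+c+\floor{n/2}\right)\left(x\DiffOp+d+\floor{n-1/2}\right)F$ contributes the term $x^3F'''+\dots$ while the left-hand side $\left(x\DiffOp+a-1\right)\left(x\DiffOp+b-1\right)\DiffOp F$ contributes $x^2F'''+\dots$; subtracting, the top-order coefficient becomes $x^2(1-x)F'''$, which matches the claimed leading term. One then reads off the coefficient of $F''$, which should collapse to $-x\varphi(x)=-x\bigl((c+d+2)x-(a+b+1)\bigr)$, the coefficient of $F'$ to $\psi_n(x)=\bigl((n-1)(c+d+n)-\lambda_n\bigr)x+ab$, and the coefficient of $F$ to $n\lambda_n$ with $\lambda_n=\left(c+\floor{n/2}\right)\left(d+\floor{n-1/2}\right)$. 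The zeroth-order term in $F$ is the cleanest check: the right-hand side gives $(-n)\left(c+\floor{n/2}\right)\left(d+\floor{n-1/2}\right)F=-n\lambda_nF$ at that order while the left-hand side gives $0$ (there is always a $\DiffOp$ acting), so moving everything to one side produces exactly $+n\lambda_nP_n$.

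The main obstacle — really the only one — is the presence of the floor functions in the upper parameters, which breaks the clean ``shift by one in the index'' structure and means $\lambda_n$ is not a polynomial in $n$ but a piecewise-quadratic one. However, this causes no difficulty for the differential equation itself: for each \emph{fixed} $n$ the upper parameters $c+\floor{n/2}$ and $d+\floor{n-1/2}$ are just constants, and the generalised hypergeometric ODE \eqref{generalised hypergeometric differential equation} applies directly with those constants. So the floor functions only appear frozen inside $\lambda_n$ and $\psi_n$, exactly as written in the statement. An alternative, self-contained route that avoids quoting \eqref{generalised hypergeometric differential equation} is to substitute the coefficient expansion \eqref{expansion of the 2-OPS over the canonical basis - hypergeometric weights} (or equivalently the series for $F$) directly into \eqref{differential equation satisfied by the 2-orthogonal polynomials - hypergeometric weights} and verify that the resulting three-term contiguous relation among the coefficients $\pochhammer[i]{-n}\pochhammer[i]{c+\floor{n/2}}\pochhammer[i]{d+\floor{n-1/2}}/\left(i!\pochhammer[i]{a}\pochhammer[i]{b}\right)$ is satisfied identically in $i$; this is a short induction on $i$ and would be my fallback if a referee objects to invoking \eqref{generalised hypergeometric differential equation} without spelling out the operator manipulation. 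Either way the proof is a finite, purely algebraic verification.
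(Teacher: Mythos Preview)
Your approach is exactly the paper's: plug the explicit ${}_3F_2$ representation \eqref{explicit formula for the 2-orthogonal polynomials as a 3F2} into the generalised hypergeometric ODE \eqref{generalised hypergeometric differential equation}, expand both sides as ordinary differential operators, and read off the coefficients. One slip to fix: the left-hand side should be $\left[(x\DiffOp+a)(x\DiffOp+b)\DiffOp\right]F$, not $\left[(x\DiffOp+a-1)(x\DiffOp+b-1)\DiffOp\right]F$ --- equation \eqref{generalised hypergeometric differential equation} already carries the shift (equivalently, $\vartheta(\vartheta+a-1)(\vartheta+b-1)=x(\vartheta+a)(\vartheta+b)\DiffOp$), and with your version the expansion would produce $(a+b-1)x$ and $(a-1)(b-1)$ in place of the correct $(a+b+1)x$ and $ab$ that you yourself quote in $\varphi$ and $\psi_n$.
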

\begin{proof}
Combining the explicit formula for the 2-orthogonal polynomials as terminating hypergeometric series \eqref{explicit formula for the 2-orthogonal polynomials as a 3F2} and the generalised hypergeometric differential equation \eqref{generalised hypergeometric differential equation}, we obtain
\begin{align}
\begin{split}
\label{diff eq first version - hypergeometric weights}
&\,\left[\left(x\,\DiffOp+a\right)\left(x\,\DiffOp+b\right)\DiffOp\right]P_n(x) \\
=&\,\left[\left(x\,\DiffOp-n\right)\left(x\,\DiffOp+c+\floor{\frac{n}{2}}\right) \left(x\,\DiffOp+d+\floor{\frac{n-1}{2}}\right)\right]P_n(x).
\end{split}
\end{align}
Expanding the left-hand side of \eqref{diff eq first version - hypergeometric weights}, we get
\begin{align*}
%\label{differential equation LHS2}
\left[\left(x\,\DiffOp+a\right)\left(x\,\DiffOp+b\right)\DiffOp\right]P_n(x)=x^2\,P'''_n(x)+(a+b+1)x\,P''_n(x)+abP'_n(x).
\end{align*}
Similarly, recalling that $\dis\floor{\frac{n}{2}}+\floor{\frac{n-1}{2}}=n-1$, for any $n\in\N$, we derive
\begin{align*}
\left[\left(x\,\DiffOp+c+\floor{\frac{n}{2}}\right)\left(x\,\DiffOp+d+\floor{\frac{n-1}{2}}\right)\right]P_n(x)
=x^2P''_n(x)+(c+d+n)x\,P'_n(x)+\epsilon_nP_n(x).
\end{align*}
%where $\dis\epsilon_n=\left(c+\floor{\frac{n}{2}}\right)\left(d+\floor{\frac{n-1}{2}}\right)$.
Therefore, the right-hand side of \eqref{diff eq first version - hypergeometric weights} is
\begin{align*}
&\left[\left(x\,\DiffOp-n\right)\left(x\,\DiffOp+c+\floor{\frac{n}{2}}\right) \left(x\,\DiffOp+d+\floor{\frac{n-1}{2}}\right)\right]P_n(x)\\
=\,&x^3\,P'''_n(x)+(c+d+2)x^2\,P''_n(x)+\left(\lambda_n-(n-1)(c+d+n)\right)xP'_n(x)-n\lambda_nP_n(x).
\end{align*}
Finally, combining the expressions for both sides of \eqref{diff eq first version - hypergeometric weights}, we derive the differential equation \eqref{differential equation satisfied by the 2-orthogonal polynomials - hypergeometric weights}.
\end{proof}

%%\newpage 

\subsection{Recurrence relation}
\label{recurrence relation}
As a $2$-orthogonal sequence, the hypergeometric type polynomials expressed by \eqref{explicit formula for the 2-orthogonal polynomials as a 3F2} satisfy a third order recurrence relation of the form
\begin{align}
\label{recurrence relation satisfied by a 2-OPS}
P_{n+1}(x)=\left(x-\beta_n\right)P_n(x)-\alpha_nP_{n-1}(x)-\gamma_{n-1}P_{n-2}(x),
\end{align}
Our purpose here is to obtain explicit expressions for the recurrence coefficients involved.
The linear independence of $\{x^n\}_{n\in\N}$ implies that we can equate their coefficients on both sides of the recurrence relation \eqref{recurrence relation satisfied by a 2-OPS}.
After equating the coefficients of $x^n$ and $x^{n-1}$ we obtain, respectively,
\begin{equation*}
\beta_n=\tau_{n,1}-\tau_{n+1,1}
\quad\text{and}\quad
%\end{equation*}
%and
%\begin{equation*}
\alpha_n=\tau_{n,2}-\tau_{n+1,2}-\left(\tau_{n,1}\right)^2+\tau_{n,1}\tau_{n+1,1},
\end{equation*}
where, based on \eqref{expansion of the 2-OPS over the canonical basis - hypergeometric weights}, we have 
\begin{equation*}
\tau_{n,1}=-\frac{n(a+n-1)(b+n-1)}{\left(c+\floor{\frac{n}{2}}+n-1\right)\left(d+\floor{\frac{n-1}{2}}+n-1\right)}
\end{equation*}
and
\begin{equation*}
\tau_{n,2}=\frac{n(a+n-1)(b+n-1)(n-1)(a+n-2)(b+n-2)}
{2\left(c+\floor{\frac{n}{2}}+n-1\right)\left(d+\floor{\frac{n-1}{2}}+n-1\right) \left(c+\floor{\frac{n}{2}}+n-2\right)\left(d+\floor{\frac{n-1}{2}}+n-2\right)}.
\end{equation*}

Hence we derive that, for each $k\in\N$,
%\begin{subequations}
%\label{betas even index}
\begin{align*}
\beta_{2k}(a,b;c,d)=
\frac{(2k+1)(a+2k)(b+2k)}{(c+3k)(d+3k)}-\frac{2k(a+2k-1)(b+2k-1)}{(c+3k-1)(d+3k-2)}
\end{align*}
and
\begin{align*}
%\label{betas odd index}
\beta_{2k+1}(a,b;c,d)=
\frac{(2k+2)(a+2k+1)(b+2k+1)}{(c+3k+2)(d+3k+1)}-\frac{(2k+1)(a+2k)(b+2k)}{(c+3k)(d+3k)}
\end{align*}
as well as  
\begin{align*}
\begin{split}
%\label{alphas odd index}
\alpha_{2k+1}(a,b;c,d) &
=\frac{(2k+1)(a+2k)(b+2k)}{(c+3k)(d+3k)}
\Bigg(\frac{k(a+2k-1)(b+2k-1)}{(c+3k-1)(d+3k-1)} \\&
-\frac{(2k+1)(a+2k)(b+2k)}{(c+3k)(d+3k)}
+\frac{(k+1)(a+2k+1)(b+2k+1)}{(c+3k+1)(d+3k+1)}\Bigg),
\end{split}
\end{align*}
and
\begin{align*}
\begin{split}
%\label{alphas even index}
\alpha_{2k+2}(a,b;c,d) &
=\frac{2(k+1)(a+2k+1)(b+2k+1)}{(c+3k+2)(d+3k+1)}
\Bigg(\frac{(2k+1)(a+2k)(b+2k)}{2(c+3k+1)(d+3k)} \\&
-\frac{2(k+1)(a+2k+1)(b+2k+1)}{(c+3k+2)(d+3k+1)}
+\frac{(2k+3)(a+2k+2)(b+2k+2)}{2(c+3k+3)(d+3k+2)}\Bigg).
\end{split}
\end{align*}
The expressions for the coefficients $\gamma_n$ in \eqref{recurrence relation satisfied by a 2-OPS} could also be obtained in an analogous way after comparing the coefficients of $x^{n-2}$. 
However, it is easier to derive such expressions directly from the $2$-orthogonality conditions, which, applied to the recurrence relation \eqref{recurrence relation satisfied by a 2-OPS}, imply that, for each $k\in\N$ and $j\in\{0,1\}$,
\begin{align*}
\gamma_{2k+1+j}(a,b;c,d)
=\frac{\dis\int_{0}^{1}x^{k+1}P_{2k+2+j}(x;a,b;c,d)\W(x;a,b+j;c+j,d)\dx}
{\dis\int_{0}^{1}x^kP_{2k+j}(x;a,b;c,d)\W(x;a,b+j;c+j,d)\dx}.
\end{align*}
%and
%\begin{align*}
%\gamma_{2k+2}(a,b;c,d)
%=\frac{\dis\int_{0}^{1}x^{k+1}P_{2k+3}(x;a,b;c,d)\W(x;a,b+1;c+1,d)\dx}
%{\dis\int_{0}^{1}x^kP_{2k+1}(x;a,b;c,d)\W(x;a,b+1;c+1,d)\dx}.
%\end{align*}
Based on the latter alongside with \eqref{nonzero integral first weight} and \eqref{nonzero integral second weight}, we deduce that, for all $k\in\N$,
\begin{align*}
%\label{gammas odd index}
\gamma_{2k+1}(a,b;c,d)
=\frac{\pochhammer[2]{2k+1}\pochhammer[2]{a+2k}\pochhammer[2]{b+2k}(d-1+k)(d-a+k)(d-b+k)}
{\pochhammer[3]{c+3k}\pochhammer[3]{d+3k-1}\pochhammer[3]{d+3k}}.
\end{align*}
and
\begin{align*}
%\label{gammas even index}
\gamma_{2k+2}(a,b;c,d)
=\frac{\pochhammer[2]{2k+2}\pochhammer[2]{a+2k+1}\pochhammer[2]{b+2k+1}(c+k)(c-a+k+1)(c-b+k+1)}
{\pochhammer[3]{c+3k+1}\pochhammer[3]{c+3k+2}\pochhammer[3]{d+3k+1}}
\end{align*}
%\end{subequations}

As a consequence, we have just proved the following result. 

%%\newpage 
\begin{theorem}
\label{recurrence relation satisfied by the 2-OPS - 1st theorem}
For $a,b,c,d\in\R^+$ satisfying \eqref{parameters}, let $\dis\polyseq[P_n(x):=P_n(x;a,b;c,d)]$ be the monic $2$-orthogonal polynomial sequence with respect to $\dis\Wvec(x;a,b;c,d)$.
Then $\dis\polyseq$ satisfies the recurrence relation
\begin{align}
\label{recurrence relation satisfied by the 2-OPS - hypergeometric weights}
P_{n+1}(x)=\left(x-\beta_n\right)P_n(x)-\alpha_nP_{n-1}(x)-\gamma_{n-1}P_{n-2}(x),
\end{align}
where, for each $n\in\N$,
\begin{subequations}
\label{recurrence relation coefficients - hypergeometric weights}
\begin{align}
\label{betas - hypergeometric weights}
\beta_n
=\frac{(n+1)(a+n)(b+n)}{\left(c'_{n-1}+n\right)\left(c'_n+n\right)}
-\frac{n(a+n-1)(b+n-1)}{\left(c'_{n-1}+n-1\right)\left(c'_n+n-2\right)},
\end{align}
\begin{align}
\begin{split}
\label{alphas - hypergeometric weights}
\alpha_{n+1} &
=\frac{(n+1)(a+n)(b+n)}{(c'_{n-1}+n)(c'_n+n)}
\Bigg(\frac{n(a+n-1)(b+n-1)}{2(c'_{n-1}+n-1)(c'_n+n-1)} \\&
-\frac{(n+1)(a+n)(b+n)}{(c'_{n-1}+n)(c'_n+n)}
+\frac{(n+2)(a+n+1)(b+n+1)}{2(c'_{n-1}+n+1)(c'_n+n+1)}\Bigg). 
\end{split}
\end{align}
and
\begin{align}
\label{gammas - hypergeometric weights}
\gamma_{n+1}
=\frac{\pochhammer[2]{n+1}\pochhammer[2]{a+n}\pochhammer[2]{b+n}(c'_n-1)(c'_n-a)(c'_n-b)}
{\pochhammer[3]{c'_{n-1}+n}\pochhammer[3]{c'_n+n}\pochhammer[3]{c'_n+n-1}},
\end{align}
\end{subequations}
with
\begin{align}
\label{c'_n}
c'_n=
\begin{cases}
c+k & \text{if }n=2k-1, \\
d+k & \text{if }n=2k.
\end{cases}
\end{align}

\end{theorem}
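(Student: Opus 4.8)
The plan is to build on two facts already in place: first, that $\polyseq$, being $2$-orthogonal with respect to $\Wvec(x;a,b;c,d)$, necessarily satisfies a third-order recurrence of the shape \eqref{recurrence relation for a 2-OPS}, equivalently \eqref{recurrence relation satisfied by a 2-OPS}, with $\gamma_n\neq 0$; and second, that the expansion of the monic polynomial $P_n$ on the monomial basis is known explicitly through \eqref{expansion of the 2-OPS over the canonical basis - hypergeometric weights}. The whole proof then reduces to identifying the recurrence coefficients, and it is natural to split this into an \emph{algebraic} part (for $\beta_n$ and $\alpha_n$) and an \emph{orthogonality} part (for $\gamma_n$).

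First I would determine $\beta_n$ and $\alpha_n$ by equating, on both sides of \eqref{recurrence relation satisfied by a 2-OPS}, the coefficients of $x^n$ and of $x^{n-1}$; writing $\tau_{n,1},\tau_{n,2}$ for the subleading coefficients of $P_n$ as in \eqref{expansion of the 2-OPS over the canonical basis - hypergeometric weights}, this gives at once
\[
\beta_n=\tau_{n,1}-\tau_{n+1,1},
\qquad
\alpha_n=\tau_{n,2}-\tau_{n+1,2}-\tau_{n,1}^{2}+\tau_{n,1}\tau_{n+1,1}.
\]
Substituting the closed forms of $\tau_{n,1}$ and $\tau_{n,2}$ and simplifying yields the stated expressions \eqref{betas - hypergeometric weights}--\eqref{alphas - hypergeometric weights}. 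The one point requiring care is that $\tau_{n,1}$ and $\tau_{n,2}$ contain the floors $\floor{\frac{n}{2}}$ and $\floor{\frac{n-1}{2}}$, so that the shifts $n\mapsto n\pm 1$ change parity and swap the roles of $c$ and $d$; one therefore writes everything out first for $n=2k$ and for $n=2k+1$ separately (as displayed in the text preceding the theorem) and then checks that both cases collapse to the uniform formulas under the substitution \eqref{c'_n}.

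For $\gamma_n$ it is cleaner to use $2$-orthogonality directly rather than to compare the coefficients of $x^{n-2}$. Multiplying \eqref{recurrence relation satisfied by a 2-OPS} by $x^{k}\,\W(x;a,b+j;c+j,d)$ and integrating over $(0,1)$, with $n=2k+2+j$ so that $n-2=2k+j$, all resulting integrals vanish by \eqref{orthogonality conditions hypergeometric weights} except those of $x\,P_n$ and of $\gamma_{n-1}P_{n-2}$; this isolates
\[
\gamma_{2k+1+j}(a,b;c,d)
=\frac{\dis\int_{0}^{1}x^{k+1}P_{2k+2+j}(x;a,b;c,d)\,\W(x;a,b+j;c+j,d)\dx}
{\dis\int_{0}^{1}x^{k}P_{2k+j}(x;a,b;c,d)\,\W(x;a,b+j;c+j,d)\dx}
\qquad(j\in\{0,1\}).
\]
Both integrals were already evaluated in the proof of Theorem \ref{explicit formulas for the 2-orthogonal polynomials - hypergeometric weights}: the denominator is \eqref{nonzero integral first weight} (for $j=0$) or \eqref{nonzero integral second weight} (for $j=1$), and the numerator is the same expression with $k$ replaced by $k+1$. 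Forming the ratio, the factorials and Pochhammer products telescope to the quotients of Pochhammer symbols of lengths $2$ and $3$ stated for $\gamma_{2k+1}$ and $\gamma_{2k+2}$, which one checks are unified by \eqref{gammas - hypergeometric weights} via \eqref{c'_n}; positivity of the two integrals (already noted) gives $\gamma_n>0$ for all $n\geq 1$.

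The only real obstacle is bookkeeping: throughout, the floor functions force an even/odd case distinction, and one must verify consistently that the index shifts in $\tau_{n\pm1,\cdot}$ and in the integral evaluations are handled correctly and that the separate even/odd expressions for $\beta$, $\alpha$ and $\gamma$ reduce to the single formulas \eqref{betas - hypergeometric weights}--\eqref{gammas - hypergeometric weights} under \eqref{c'_n}. No new ingredient is needed beyond \eqref{expansion of the 2-OPS over the canonical basis - hypergeometric weights}, the orthogonality relations \eqref{orthogonality conditions hypergeometric weights}, and the integral values \eqref{nonzero integral first weight}--\eqref{nonzero integral second weight}.
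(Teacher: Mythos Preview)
Your proposal is correct and follows essentially the same approach as the paper: equate the coefficients of $x^n$ and $x^{n-1}$ in the recurrence to extract $\beta_n$ and $\alpha_n$ from $\tau_{n,1},\tau_{n,2}$, handle the even/odd cases separately before unifying via \eqref{c'_n}, and obtain $\gamma_n$ from the ratio of the integrals \eqref{nonzero integral first weight}--\eqref{nonzero integral second weight} via the $2$-orthogonality conditions.
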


%\newpage
With the purpose of rewriting the recurrence relation coefficients using more convenient expressions, we introduce a set of positive coefficients $\dis\left(\lambda_k=\lambda_k\left(a,b;c,d\right)\right)_{k\in\N}$, involving the $c'_n$ introduced in the latter theorem and defined by
\begin{align}
\label{coefficients of the branched continued fraction - hypergeometric weights}
\begin{cases}
\dis\lambda_{3n}=\frac{n(b+n-1)(c'_{n}-a-1)}{(c'_{n}+n-2)(c'_{n}+n-1)(c'_{n-1}+n-1)},
\vspace*{0,25 cm}\\
\dis\lambda_{3n+1}=\frac{n(a+n)(c'_{n-1}-b)}{(c'_{n}+n-1)(c'_{n-1}+n-1)(c'_{n-1}+n)},
\vspace*{0,25 cm}\\
\dis\lambda_{3n+2}=\frac{(a+n)(b+n)(c'_{n}-1)}{(c'_{n}+n-1)(c'_{n}+n)(c'_{n-1}+n)}.
\end{cases}
\end{align}

The coefficients above were obtained from \cite[Th.~14.5]{AlanSokalEtAlBranchedContinuedFractions} as the coefficients of a branched continued fraction representation for $\HypergeometricOneLine[t]{3}{2}{a,b,1}{c,d}$, the ordinary generating function of the moment sequence given by \eqref{moments of the hypergeometric weight}.

Observe that $\lambda_0=\lambda_1=0$ and $\lambda_k>0$, for all $k\geq 2$.
In addition, $\dis\lambda_k\to\frac{4}{27}$, as $k\to\infty$, and we have, for all $n\in\N$,
\begin{itemize}
\item $\dis\beta_n=\lambda_{3n}+\lambda_{3n+1}+\lambda_{3n+2}$; \vspace*{0,1 cm}
\hfill\refstepcounter{equation}\textup{(\theequation)}\label{betas}
 
\item $\dis\alpha_{n+1}=\lambda_{3n+1}\lambda_{3n+3}+\lambda_{3n+2}\lambda_{3n+3}+\lambda_{3n+2}\lambda_{3n+4}$;
\hfill\refstepcounter{equation}\textup{(\theequation)}\label{alphas}

\item $\dis\gamma_{n+1}=\lambda_{3n+2}\lambda_{3n+4}\lambda_{3n+6}$.
\hfill\refstepcounter{equation}\textup{(\theequation)}\label{gammas}
\end{itemize}

Therefore, we can rewrite Theorem \ref{recurrence relation satisfied by the 2-OPS - 1st theorem} as the following result.
\begin{theorem}
\label{recurrence relation satisfied by the 2-OPS and asymptotic behaviour of the recurrence coefficients revisited}
For $a,b,c,d\in\R^+$ satisfying \eqref{parameters}, let $\dis\polyseq[P_n(x):=P_n(x;a,b;c,d)]$ be the monic $2$-orthogonal polynomial sequence with respect to $\dis\Wvec(x;a,b;c,d)$ and let the coefficients $\lambda_k$, $k\in\N$, be defined by \eqref{coefficients of the branched continued fraction - hypergeometric weights}
Then $\dis\polyseq$ satisfies the recurrence relation \eqref{recurrence relation satisfied by the 2-OPS - hypergeometric weights}, with coefficients given by \eqref{betas}-\eqref{gammas}. 
Therefore, the recurrence coefficients are real, positive and bounded with asymptotic behaviour
\begin{align}
\label{limits of the recurrence relation coefficients}
\beta_n\to 3\left(\frac{4}{27}\right)=\frac{4}{9},
\quad
\alpha_n\to 3\left(\frac{4}{27}\right)^2=\frac{16}{243}
\quad\text{and}\quad
\gamma_n\to \left(\frac{4}{27}\right)^3=\frac{64}{19683},
\quad\text{as} \ n\to\infty.
\end{align}
\end{theorem}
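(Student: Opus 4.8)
The plan is to obtain the statement from Theorem~\ref{recurrence relation satisfied by the 2-OPS - 1st theorem} together with an elementary asymptotic computation, in three steps: (i) verify the identities \eqref{betas}--\eqref{gammas} that express $\beta_n$, $\alpha_{n+1}$, $\gamma_{n+1}$ through the coefficients $\lambda_k$ of \eqref{coefficients of the branched continued fraction - hypergeometric weights}; (ii) establish that $\lambda_0=\lambda_1=0$, that $\lambda_k>0$ for $k\ge 2$, and that $\lambda_k\to\tfrac{4}{27}$ as $k\to\infty$; and (iii) read off positivity, boundedness and the limits \eqref{limits of the recurrence relation coefficients}. Since Theorem~\ref{recurrence relation satisfied by the 2-OPS - 1st theorem} already provides the recurrence \eqref{recurrence relation satisfied by the 2-OPS - hypergeometric weights} with the explicit coefficients \eqref{betas - hypergeometric weights}--\eqref{gammas - hypergeometric weights}, only step~(i) is needed to justify the reformulation itself; steps~(ii)--(iii) supply the new asymptotic content.

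For step~(i) I would first record the single structural relation $c'_{m+2}=c'_m+1$ for every $m$, immediate from \eqref{c'_n} (each of the subsequences $(c'_{2k-1})_k$ and $(c'_{2k})_k$ is arithmetic of step $1$); equivalently, $c'_{n-1}$ and $c'_n$ together with $a,b,n$ determine every $c'_m$ occurring on the right-hand sides of \eqref{betas}--\eqref{gammas}. Re-expressing $\lambda_{3n+3},\lambda_{3n+4},\lambda_{3n+6}$ in terms of $c'_{n-1}$ and $c'_n$ via this relation, the product $\lambda_{3n+2}\lambda_{3n+4}\lambda_{3n+6}$ collapses to the ratio with numerator $\pochhammer[2]{n+1}\pochhammer[2]{a+n}\pochhammer[2]{b+n}(c'_n-1)(c'_n-a)(c'_n-b)$ and denominator $\pochhammer[3]{c'_{n-1}+n}\pochhammer[3]{c'_n+n}\pochhammer[3]{c'_n+n-1}$, which is exactly $\gamma_{n+1}$ in \eqref{gammas - hypergeometric weights}; this settles \eqref{gammas}. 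For \eqref{betas} and \eqref{alphas} one brings the two, respectively three, summands over a common denominator and checks that the numerator simplifies to the one in \eqref{betas - hypergeometric weights}, respectively \eqref{alphas - hypergeometric weights}: each is then a single rational identity in the variables $a,b,c'_{n-1},c'_n,n$, valid with no parity case distinction (for \eqref{alphas} one uses $c'_{n+1}=c'_{n-1}+1$). As a cleaner alternative, which I would present as the main line, one can bypass these manipulations: the moments \eqref{moments of the hypergeometric weight} have ordinary generating function $\threeFtwoOneLine[t]{a,b,1}{c,d}$, and by the branched-continued-fraction / production-matrix correspondence of \cite{AlanSokalEtAlBranchedContinuedFractions} the Hessenberg matrix \eqref{Hessenberg matrix} governing the recurrence \eqref{recurrence relation for a 2-OPS} of the associated monic $2$-orthogonal sequence has entries given precisely by the windowed sums and products \eqref{betas}--\eqref{gammas} of the branched continued fraction coefficients $\lambda_k$ of \eqref{coefficients of the branched continued fraction - hypergeometric weights}.

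For step~(ii), $\lambda_0=\lambda_1=0$ is read off \eqref{coefficients of the branched continued fraction - hypergeometric weights} (taking $n=0$ in its first two lines produces a vanishing factor), while for $k\ge 2$ every factor of the form $a+n$, $b+n$, $c'_m-1$, $c'_m-a$, $c'_m-b$ appearing in a $\lambda_k$ is strictly positive under the assumptions \eqref{parameters} (recall $c'_m\ge\min\{c,d\}>\max\{a,b\}$ and, for the $m$ actually occurring, $c'_m>1$), so $\lambda_k>0$. For the limit, \eqref{c'_n} gives $c'_m=\tfrac{m}{2}+O(1)$; substituting this into \eqref{coefficients of the branched continued fraction - hypergeometric weights}, each of $\lambda_{3n},\lambda_{3n+1},\lambda_{3n+2}$ is a ratio of a cubic in $n$ over a cubic in $n$ whose numerator has leading coefficient $\tfrac12$ and whose denominator has leading coefficient $\left(\tfrac32\right)^3=\tfrac{27}{8}$, whence each subsequence tends to $\dfrac{1/2}{27/8}=\tfrac{4}{27}$, i.e. $\lambda_k\to\tfrac{4}{27}$.

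Step~(iii) is then immediate: by \eqref{betas}--\eqref{gammas} the quantities $\beta_n,\alpha_n,\gamma_n$ are finite sums and products of positive $\lambda_k$'s, hence positive (so $\gamma_n\neq 0$ for every $n\ge 1$, as required of a $2$-orthogonal sequence), and, as continuous functions of convergent quantities, they are bounded with $\beta_n\to 3\cdot\tfrac{4}{27}=\tfrac49$, $\alpha_n\to 3\cdot\left(\tfrac{4}{27}\right)^2=\tfrac{16}{243}$ and $\gamma_n\to\left(\tfrac{4}{27}\right)^3=\tfrac{64}{19683}$. I do not expect a genuine obstacle here: the only non-mechanical point is the algebraic verification in step~(i), and the hard part is purely organisational — either carrying out the common-denominator computation cleanly for $\beta_n$ and $\alpha_{n+1}$, or, in the structural route, matching conventions with the production-matrix formalism of \cite{AlanSokalEtAlBranchedContinuedFractions} so that \eqref{betas}--\eqref{gammas} drop out without recomputation.
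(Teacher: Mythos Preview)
Your proposal is correct and follows essentially the same approach as the paper: the paper presents this theorem merely as a rewriting of Theorem~\ref{recurrence relation satisfied by the 2-OPS - 1st theorem}, simply asserting the identities \eqref{betas}--\eqref{gammas}, the positivity of $\lambda_k$ for $k\ge 2$, and the limit $\lambda_k\to\tfrac{4}{27}$ without detailed justification, whereas you supply those details. One small refinement in step~(ii): for $\lambda_2$ (the case $n=0$ in $\lambda_{3n+2}$) the factor $c'_0-1=d-1$ need not be positive under \eqref{parameters}, but it cancels against the same factor $c'_0+0-1$ in the denominator, yielding $\lambda_2=\tfrac{ab}{cd}>0$; for $n\ge 1$ the factors $c'_n-1$, $c'_n-a-1$, $c'_{n-1}-b$ are genuinely positive since $c'_n\ge\min\{c,d\}+1$ in the relevant cases.
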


The expressions \eqref{betas}-\eqref{gammas} for the recurrence coefficients lead to a decomposition of the lower-Hessenberg matrix \eqref{Hessenberg matrix} as a product of three bidiagonal matrices with positive entries in the nonzero diagonals.
Thus, \eqref{Hessenberg matrix} is a special type of totally positive matrix, an oscillatory matrix (see \cite{GantmacherKreinOscillationMatrices}). As a result, we can conclude that the zeros of $P_n(x)$, which correspond to the eigenvalues of $H_n$, are real and positive as well as that the zeros of consecutive polynomials interlace, similarly to the main result of \cite[\S 9.2]{SaibNewPerspectivesOnD-OPS}.
Furthermore, applying \cite[Th.~3.5]{PaperTricomiWeights} to this case, with $\beta=\frac{4}{9}$, $\alpha=\frac{16}{243}$ and $\gamma=\frac{64}{19683}$, we guarantee that the zeros have absolute value less than $1$.
Therefore, we have an alternative proof, independent of the system being Nikishin, that the zeros of $\dis\polyseq[P_n(x;a,b;c,d)]$ are all located in the interval $(0,1)$ and that the zeros of consecutive polynomials interlace.

%\newpage

\subsection{Asymptotic behaviour. Connection with Jacobi-Pi\~neiro polynomials}
\label{Jacobi-Pineiro polynomials and asymptotic results}

Jacobi-Pi\~neiro polynomials are multiple orthogonal polynomials with respect to several classical Jacobi weights on the same interval.
They are usually defined as the multiple orthogonal polynomials with respect to measures $\dis\left(\mu_0,\cdots,\mu_{r-1}\right)$ supported on the interval $(0,1)$, with $\dis\dx[\mu_i](x)=x^{\alpha_i}(1-x)^\beta\dx$ for some $\beta,\alpha_1,\cdots,\alpha_r>-1$ such that $\alpha_i-\alpha_j\not\in\Z$ for any $i\neq j$.
These polynomials were introduced by Pi\~neiro in \cite{Pineiro} with $\beta=0$.
See \cite{WalterCoussementSomeClassicalMOPs} for a Rodrigues formula generating the type II Jacobi-Pi\~neiro polynomials as well as explicit expressions for the polynomials and for their recurrence relation coefficients.

The asymptotic behaviour of the recurrence relation coefficients in Theorem \ref{recurrence relation satisfied by the 2-OPS - 1st theorem} coincides with the asymptotic behaviour obtained in \cite{WalterCoussementSomeClassicalMOPs} for the coefficients of the recurrence relation satisfied by the Jacobi-Pi\~neiro polynomials.
Based on this relation, we show in this subsection that the polynomials investigated here share the ratio asymptotics, the asymptotic zero distribution and a Mehler-Heine asymptotic formula near the endpoint $0$ with the Jacobi-Pi\~neiro polynomials.
In fact, the Jacobi-Pi\~neiro polynomials originally studied by Pi\~neiro in \cite{Pineiro} are a limiting case of the polynomials investigated here. Precisely, the choice of $c=a$ and $d=b+1$ gives  $\W(x;a,b;c,d)=bx^{b-1}$ and $\W(x;a,b+1;c+1,d)=ax^{a-1}$, and, for this reason, the explicit formulas for the polynomials obtained in \S\ref{explicit expression} and in \cite{Pineiro} coincide.

Due to the asymptotic behaviour of the recurrence  coefficients obtained in Theorem \ref{recurrence relation satisfied by the 2-OPS - 1st theorem} and to the zeros of $P_n(x)$ being real, simple and interlacing with the zeros of $P_{n+1}(x)$, for each $n\in\N$, as previously shown, we can use \cite[Lemma~3.2]{AptKalLagoRochaLimitBehaviour} and \cite[Th.~3.1]{WalterCoussementX2AsymptoticZeroDistribution} to derive that
\begin{align}
\label{ratio asymptotics for the 2-OPS wrt the hypergeometric weights}
\lim_{n\to\infty}\frac{P_n(x)}{P_{n+1}(x)}
=\rho(x):=\frac{27}{4}\left(\frac{3}{2}\,x^\frac{1}{3}
\left(\e^{\frac{4\pi i}{3}}\left(-1+\sqrt{1-x}\right)^{\frac{1}{3}}
+\e^{\frac{2\pi i}{3}}\left(-1-\sqrt{1-x}\right)^{\frac{1}{3}}\right)-1\right), 
\end{align}
uniformly on compact subsets of $\mathbb{C}\backslash{[0,1]}$. 
As explained in \cite{WalterCoussementX2AsymptoticZeroDistribution}, the knowledge of this ratio asymptotic leads to prove that 
\[
	\lim_{n\to \infty} \frac{P_n'(x)}{P_n(x)} = - \frac{\rho'(x)}{\rho(x)} , \ \text{for } \ x\in\mathbb{C}\backslash{[0,1]},
\]
which results in showing that there exists a limit for the 
normalised zero counting measure of $P_n(x)$ 
\[
	\nu_n:= \frac{1}{n} \sum_{P_n(x)=0} \delta_x, 
\]
as $n\to\infty$, in the sense of the weak limit of measures, ie
$\displaystyle\lim_{n\to\infty}\int f\mathrm{d}\nu_n=\int f\mathrm{d}\nu$ for every bounded and continuous function $f$ on $[0,1]$. Here  $\delta_x$ is the Dirac point mass at $x$. 
As such, it was proved that 
\[
	\lim_{n\to \infty}  \int \frac{1}{x-t} d\nu_n(t) 
	 = - \frac{\rho'(x)}{\rho(x)} , \ \text{for } \ x\in\mathbb{C}\backslash{[0,1]},
\]
and, as shown in \cite[Th.~2.1]{WalterCoussementX2AsymptoticZeroDistribution}  the limiting measure $\nu$ has density 
\begin{align}
\label{asymptotic zero distribution for the 2-OPS wrt the hypergeometric weights}
\DiffOpFunction{\nu}=
\begin{cases}
\dis\frac{\sqrt{3}}{4\pi}\, \frac{\left(1+\sqrt{1-x}\right)^{\frac{1}{3}}+\left(1-\sqrt{1-x}\right)^{\frac{1}{3}}}{x^{\frac{2}{3}}\sqrt{1-x}} & \text{if } x\in(0,1), \\[0.3cm]
0 & \text{elsewhere},
\end{cases}
\end{align}
which is the asymptotic zero distribution of $\polyseq$. 

Jacobi-Pi\~neiro polynomials (orthogonal with respect to two measures) on the step line and the polynomial sequence $\polyseq$ under analysis share the same  ratio asymptotics and the asymptotic zero distribution because their recurrence coefficients have the same asymptotic behaviour and their zeros are simple, real, satisfy the interlacing property and are located on the interval $[0,1]$. 

%The ratio asymptotics and the asymptotic zero distribution given by the two latter formulas coincide with the analogous results for the Jacobi-Pi\~neiro polynomials obtained in \cite{WalterCoussementX2AsymptoticZeroDistribution}.

We also derive a Mehler-Heine asymptotic formula satisfied by the $2$-orthogonal polynomials $P_n(x;a,b;c,d)$ near the origin, which give us more information about the zeros near the endpoint $0$ of the orthogonality interval.
For that purpose, we recall that the generalised hypergeometric series ${}_{p+1}F_q$ satisfies the confluent relation (see \cite[Eq.~16.8.10]{DLMF})
\begin{align}
\label{confluent relation for the generalised hypergeometric series}
\lim_{|\alpha|\to\infty}
\Hypergeometric[\frac{z}{\alpha}]{p+1}{q}{\alpha_1,\cdots,\alpha_p,\alpha}{\beta_1,\cdots,\beta_q}
=\Hypergeometric[z]{p}{q}{\alpha_1,\cdots,\alpha_p}{\beta_1,\cdots,\beta_q},
\end{align}
whenever both sides of this relation are convergent.
Moreover, we recall Theorem \ref{explicit formulas for the 2-orthogonal polynomials - hypergeometric weights} to write 
\begin{align*}
\frac{(-1)^n\pochhammer{c+\floor{\frac{n}{2}}}\pochhammer{d+\floor{\frac{n-1}{2}}}}{\pochhammer{a}\pochhammer{b}}
\,P_n\left(\frac{z}{n^3};a,b;c,d\right)
=\threeFtwo[\frac{z}{n^3}]{-n,c+\floor{\frac{n}{2}},d+\floor{\frac{n-1}{2}}}{a,b}. 
\end{align*}
Clearly, $\dis c+\floor{\frac{n}{2}},d+\floor{\frac{n-1}{2}}\sim\frac{n}{2}$, as $n\to\infty$. 
So we apply \eqref{confluent relation for the generalised hypergeometric series} three consecutive times to the generalised hypergeometric series on the right-hand side of the latter equation to deduce a Mehler-Heine type formula near the endpoint $0$ 
\begin{align}
\label{Mehler-Heine formula 2-OPS hypergeometric weights}
&\lim_{n\to\infty}
\frac{(-1)^n\pochhammer{c+\floor{\frac{n}{2}}}\pochhammer{d+\floor{\frac{n-1}{2}}}}{\pochhammer{a}\pochhammer{b}}
\,P_n\left(\frac{z}{n^3};a,b;c,d\right)
=\Hypergeometric[-\frac{z}{4}]{0}{2}{-}{a,b},
\end{align}
which converges uniformly on compact subsets of $\mathbb{C}$.

Note that the limit in this Mehler-Heine formula coincides with the limit in the Mehler-Heine formula for the Jacobi-Pi\~neiro polynomials obtained in \cite[Th.~2]{WalterMehlerHeineAsymptoticsForMOPs}, with $r=2$ and $\dis q_1=q_2={1}/{2}$.

Furthermore, we can derive a result about the asymptotic behaviour of the $k$-th smallest zero  of $P_n(x;a,b;c,d)$, which also coincides with the one obtained in \cite[\S 4]{WalterMehlerHeineAsymptoticsForMOPs} for the zeros of the $2$-orthogonal Jacobi-Pi\~neiro polynomials.
In fact, if we denote the zeros of $P_n(x;a,b;c,d)$ by $\left(x_k^{(n)}\right)_{1\leq k\leq n}$ and the zeros of the generalised hypergeometric series $\dis\HypergeometricOneLine[-z]{0}{2}{-}{a,b}$, which are all real and positive, by $\left(f_k\right)_{k\in\Z^+}$, with the zeros written in increasing order for both cases, then we have
\begin{align*}
\lim_{n\to\infty}n^3x_k^{(n)}=4f_k.
\end{align*}

\subsection{Particular cases: Jacobi-type 2-orthogonal polynomials and a sequence with constant recurrence relation coefficients}
\label{Jacobi-type 2-OPS and sequence with constant recurrence relation coefficients}

Using the coefficients $c'_n$ introduced in \eqref{c'_n}, the explicit expression for the type II polynomials given by \eqref{explicit formula for the 2-orthogonal polynomials as a 3F2} can be rewritten as
\begin{align}
\label{explicit formula for the 2-orthogonal polynomials as a 3F2 rewritten}
P_n(x;a,b;c,d)=
\frac{(-1)^n\pochhammer{a}\pochhammer{b}}{\pochhammer{c'_{n-2}}\pochhammer{c'_{n-1}}}
\,\threeFtwo{-n,c'_{n-2},c'_{n-1}}{a,b}
\end{align}
Furthermore, if $\dis d=c+\frac{1}{2}$, then $c'_n=\dis c+\frac{n+1}{2}$, for any $n\in\N$, and the expression above becomes
\begin{align}
\label{explicit formula for the 2-orthogonal polynomials as a 3F2 when d=c+1/2}
P_n\left(x;a,b;c,c+\frac{1}{2}\right)=
\frac{(-4)^n\pochhammer{a}\pochhammer{b}}{\pochhammer[2n]{2c-1+n}}\,\threeFtwo{-n,c+\frac{n-1}{2},c+\frac{n}{2}}{a,b}.
\end{align}
The latter polynomials coincide, up to a linear transformation of the variable, with the Jacobi-type $2$-orthogonal polynomials investigated in \cite{LamiriOuni}, with $\dis c=\frac{\nu+1}{2}$.

A particular case of \eqref{explicit formula for the 2-orthogonal polynomials as a 3F2 when d=c+1/2} which is worth of interest arises if we set $\dis(a,b;c,d)=\left(\frac{4}{3},\frac{5}{3};2,\frac{5}{2}\right)$. 
This choice of parameters gives  
%\begin{subequations}
\begin{align}
\label{explicit expression for the 2-OPS with constant recurrence coefficients}
P_n\left(x;\frac{4}{3},\frac{5}{3};2,\frac{5}{2}\right)
=\frac{(n+1)(n+2)}{2}\left(\frac{-4}{27}\right)^n
\threeFtwo{-n,\frac{n+3}{2},\frac{n}{2}+2}{\frac{4}{3},\frac{5}{3}},
\end{align}
where we have used $\dis\pochhammer{\frac{4}{3}}\pochhammer{\frac{5}{3}}=\frac{\pochhammer[2n+2]{n+1}}{2\cdot 27^n}$. 
So, we have $\dis c'_n=\frac{n+5}{2}$, for any $n\in\N$, and the recurrence relation coefficients given by \eqref{betas - hypergeometric weights}-\eqref{gammas - hypergeometric weights} are all constant and equal to the limits in \eqref{limits of the recurrence relation coefficients}, precisely we have: 
\begin{align*}
%\label{constant recurrence coefficients}
\beta_n\left(\frac{4}{3},\frac{5}{3};2,\frac{5}{2}\right)=\frac{4}{9},
\quad
\alpha_{n+1}\left(\frac{4}{3},\frac{5}{3};2,\frac{5}{2}\right)=\frac{16}{243}
\quad\text{and}\quad
\gamma_{n+1}\left(\frac{4}{3},\frac{5}{3};2,\frac{5}{2}\right)=\frac{64}{19683}, 
\end{align*}
for all $n\in\N$. 
Therefore, based on Theorem \ref{recurrence relation satisfied by the 2-OPS - 1st theorem}, the sequence $\dis\polyseq[P_n\left(x;\frac{4}{3},\frac{5}{3};2,\frac{5}{2}\right)]$ satisfies the third-order recurrence relation with constant coefficients
\begin{align}
\label{recurrence relation with constant coefficients}
P_{n+1}(x)=\left(x-\frac{4}{9}\right)P_n(x)-\frac{16}{243}\,P_{n-1}(x)-\frac{64}{19683}\,P_{n-2}(x).
\end{align} 
Finally, recall  \eqref{hypergeometric weight vector} and \eqref{hypergeometric weight definition} and use \cite[Eq.~15.4.9]{DLMF} to conclude that the polynomials in \eqref{explicit expression for the 2-OPS with constant recurrence coefficients}-\eqref{constant coeffs weights} are $2$-orthogonal with respect to the vector of weights
\begin{align}\label{constant coeffs weights}
\Wvec\left(x;\frac{4}{3},\frac{5}{3};2,\frac{5}{2}\right)=
\twovector{\frac{81\sqrt{3}}{16\pi}\,x^{\frac{1}{3}} \left(\left(1+\sqrt{1-x}\right)^{\frac{1}{3}}-\left(1-\sqrt{1-x}\right)^{\frac{1}{3}}\right)} 
{\frac{243\sqrt{3}}{160\pi}\,x^{\frac{1}{3}} \left(\left(1+\sqrt{1-x}\right)^{\frac{4}{3}}-\left(1-\sqrt{1-x}\right)^{\frac{4}{3}}\right)}.
%\twovector{\W\left(x;\frac{4}{3},\frac{5}{3};2,\frac{5}{2}\right) =\frac{27\sqrt{3}}{8\pi}\,x^{\frac{1}{3}}(1-x)^{\frac{1}{2}}\,\twoFone[1-x]{\frac{1}{3},\frac{5}{6}}{\frac{3}{2}} =\frac{81\sqrt{3}}{16\pi}\,x^{\frac{1}{3}} \left(\left(1+\sqrt{1-x}\right)^{\frac{1}{3}}-\left(1-\sqrt{1-x}\right)^{\frac{1}{3}}\right)} {\W\left(x;\frac{4}{3},\frac{8}{3};3,\frac{5}{2}\right) =\frac{81\sqrt{3}}{20\pi}\,x^{\frac{1}{3}}(1-x)^{\frac{1}{2}}\,\twoFone[1-x]{\frac{1}{3},-\frac{1}{6}}{\frac{3}{2}} =\frac{243\sqrt{3}}{160\pi}\,x^{\frac{1}{3}} \left(\left(1+\sqrt{1-x}\right)^{\frac{4}{3}}-\left(1-\sqrt{1-x}\right)^{\frac{4}{3}}\right)}.
\end{align}
%\end{subequations}
Observe the similarities between the orthogonality weights above and the asymptotic zero distribution \eqref{asymptotic zero distribution for the 2-OPS wrt the hypergeometric weights}.

\subsection{Connection with other Hahn-classical 2-orthogonal polynomials}
\label{Hahn-classical 3-fold symmetric 2-OPS}

Particular choices on the parameters $a,b,c$ and $d$ of the 2-orthogonal polynomial sequence \eqref{explicit formula for the 2-orthogonal polynomials as a 3F2} appeared in  \cite{AnaWalter3FoldSym} as the components of a certain family of {\it threefold symmetric Hahn-classical} $2$-orthogonal polynomials on star-like sets. 
A polynomial sequence $\dis\polyseq[S_n(x)]$ is said to be threefold symmetric if
\begin{align}
S_n\left(\e^{\frac{2\pi }{3}i}\ x\right)=\e^{\frac{2n\pi }{3}i}S_n(x)
\text{ and }
S_n\left(\e^{\frac{4\pi }{3}i}\ x\right)=\e^{\frac{4n\pi }{3}i}S_n(x).
\ \text{ for all }\ n\in\mathbb{N}.  
\end{align}
This means there exist three polynomial sequences $\dis\polyseq[S_n^\kk(x)]$, with $k\in\{0,1,2\}$, which are called the \textit{cubic components} of $\dis\polyseq[S_n(x)]$, such that 
\begin{equation}
\label{cubic comp}
S_{3n+k}(x)=x^k S_n^\kk(x^3) \ \text{ for all }\ n\in\mathbb{N}.
\end{equation} 

As reported in \cite{DouakandMaroniClassiquesDeDimensionDeux} and studied in detail in \cite{AnaWalter3FoldSym}, there are four distinct families of Hahn-classical threefold symmetric $2$-orthogonal polynomials, up to a linear transformation of the variable. The four arising cases were therein denominated as A, B1, B2 and C. 
The polynomials in case A have no parameter dependence and their cubic components are particular cases of the $2$-orthogonal polynomials with respect to Macdonald functions investigated in \cite{BenCheikhDouak} and \cite{SemyonWalter}, while the polynomials in cases B1 and B2 depend on a parameter and their cubic components are particular cases of the $2$-orthogonal polynomials with respect to weights involving confluent hypergeometric functions of the second kind. These components were investigated in \cite{PaperTricomiWeights}.
At last, the cubic components of the polynomials in case C, depend on two parameters, are particular cases of the $2$-orthogonal polynomials under analysis here. 

Precisely, denoting the Hahn-classical threefold symmetric $2$-orthogonal polynomials analysed in \cite[\S 3.4]{AnaWalter3FoldSym} by $S_n(x;\mu,\rho)$ and their cubic components by $S_n^\kk(x;\mu,\rho)$, $k\in\{0,1,2\}$, and comparing the explicit expressions exhibited in \cite[\S 3.4.1]{AnaWalter3FoldSym} with \eqref{explicit formula for the 2-orthogonal polynomials as a 3F2}, we derive that, for each $\mu,\rho\in\R^+$ and $k\in\{0,1,2\}$,
\begin{align*}
S_n^\kk(x;\mu,\rho)=P_n\left(x;a_k,b_k;c_k,d_k\right),
\end{align*}
with
%%% removed \displaystyle
$ \left(a_k,b_k;c_k,d_k\right)$ equal to 
$ \left(\frac{1}{3},\frac{2}{3};\frac{\mu+2}{3},\frac{\rho}{3}+1\right)$,
$ \left(\frac{4}{3},\frac{2}{3};\frac{\rho}{3}+1,\frac{\mu+5}{3}\right)$ and
$ \left(\frac{4}{3},\frac{5}{3};\frac{\mu+5}{3},\frac{\rho}{3}+2\right)$,
for $k=0,1,2$, respectively.

Furthermore, there are confluent relations between the 2-orthogonal polynomials analysed here and the ones investigated in \cite{PaperTricomiWeights}.
These relations generalise the ones between case C and cases B1 and B2 in \cite{AnaWalter3FoldSym}, similarly to how the confluent relations shown in \cite[Section~3.5]{PaperTricomiWeights} generalise the ones between cases B1 and B2 and case A.

The 2-orthogonal polynomials investigated in \cite{PaperTricomiWeights} satisfy orthogonality conditions with respect to weight functions $\V(x;a,b;c)$ and $\V(x;a,b;c+1)$, supported in $\R^+$, with $a,b,c\in\R^+$ such that $c>\max\{a,b\}$ and
\begin{align}
\label{Tricomi weight definition}
\V(x;a,b;c)=\frac{\Gamma(c)}{\Gamma(a)\Gamma(b)}\,\e^{-x}x^{a-1}\,\KummerU{c-b}{a-b+1},
\end{align}
where $\dis\KummerU[x]{\alpha}{\beta}$ is the confluent hypergeometric function of the second kind, also known as the Tricomi function (see \cite[\S 13]{DLMF} for the definition and some properties of the confluent hypergeometric functions). These $2$-orthogonal polynomials have also appeared in \cite{DouakMaroni2020}.

As shown in \cite[Th.~3.1]{PaperTricomiWeights}, if we denote by $R_n^\parameter{\epsilon}(x;a,b;c)$, with $\epsilon\in\{0,1\}$, the 2-orthogonal polynomials with respect to $\dis\left[\V(x;a,b;c+\epsilon),\V(x;a,b;c+1-\epsilon)\right]$, then
\begin{align}
\label{explicit formula for the 2-orthogonal polynomials as a 2F2}
R_n^\parameter{\epsilon}(x)
=\frac{(-1)^n\pochhammer{a}\pochhammer{b}}{\pochhammer{c+\floor{\frac{n+\epsilon}{2}}}}
\,\twoFtwo{-n,c+\floor{\frac{n+\epsilon}{2}}}{a,b}.
\end{align}

The confluent relations are a straightforward consequence of the explicit expressions for the $2$-orthogonal polynomials via the confluent relation for the generalised hypergeometric series \eqref{confluent relation for the generalised hypergeometric series}.
Naturally, limiting relations connecting the corresponding weight functions are obtained in a similar manner.

So, applying the confluent relation for the generalised hypergeometric series \eqref{confluent relation for the generalised hypergeometric series} to the polynomials defined by \eqref{explicit formula for the 2-orthogonal polynomials as a 3F2}, we derive the confluent relations
\begin{align}
\label{confluent relations between the 2-OPS wrt to the hypergeometric wrights and the Tricomi weights}
\lim_{d\to\infty}P_n\left(\frac{x}{d};a,b;c,d\right)=R_n^\parameter{0}\left(x;a,b;c\right)
\quad\text{and}\quad
\lim_{c\to\infty}P_n\left(\frac{x}{c};a,b;c,d\right)=R_n^\parameter{1}\left(x;a,b;d-1\right).
\end{align}
We can also obtain similar confluent relations connecting the weight functions $\W(x;a,b;c,d)$, defined by \eqref{hypergeometric weight definition}, and $\V(x;a,b;c)$, defined as in \eqref{Tricomi weight definition}.
More precisely, we derive
\begin{align}
\label{confluent relations between the the hypergeometric wrights and the Tricomi weights}
\lim_{d\to\infty}\frac{1}{d}\,\W\left(\frac{x}{d};a,b;c,d\right)=\V\left(x;a,b;c\right)
\quad\text{and}\quad
\lim_{c\to\infty}\frac{1}{c}\,\W\left(\frac{x}{c};a,b;c,d\right)=\V\left(x;a,b;d\right),
\end{align} 
as a consequence of combining the linear transformation of variable (see \cite[Eq.~15.8.1]{DLMF})
\begin{align}
\twoFone[z]{\alpha,\gamma-\beta}{\gamma}=\twoFone[\frac{z}{z-1}]{\alpha,\beta}{\gamma}
\end{align}
and the limiting relation between the hypergeometric and Tricomi functions (see \cite[Eq.~6.8.1]{BatemanProjectVol1})
\begin{align}
\lim_{\gamma\to\infty}\twoFone[1-\frac{\gamma}{x}]{\alpha,\beta}{\gamma}=x^{\alpha}\KummerU{\alpha}{\alpha-\beta+1}.
\end{align}
%(see \cite[Eq.~15.8.1]{DLMF} and \cite[Eq.~6.8.1]{BatemanProjectVol1}, respectively).
%with $\alpha=c-b$, $\beta=c-a$, $\delta=c+d-a-b$ and $\dis z=1-\frac{d}{x}$.

\bigskip

\subsection*{Concluding remarks.}

The main contribution of this paper is the analysis of the multiple orthogonal polynomials on the step line with respect to the Nikishin system obtained in Subsection \ref{Nikishin system}.
The study of the multiple orthogonal polynomials with respect to the same system for indices out of the step line and, in particular, the study of the (standard) orthogonal polynomials with respect to the weight function $\W(x;a,b;c,d)$ remains an open (and challenging) problem.
The same holds in general when the weight function is a solution to a second (or higher) order differential equation. 
In spite of this, the knowledge of the multiple orthogonal polynomials whose indexes lie on the step line is a largely sufficient tool for its applicability to a number of related fields in mathematics.
An example of this applicability is the newly found connection between multiple orthogonal polynomials and branched continued fractions   which will be object of further research. 

\bigskip

\textbf{Acknowledgements:}
We are grateful to Alex Dyachenko for kindly sharing with us a relevant formula from his joint work with Dmitrii Karp \cite{DyachenkoKarp}, which is in preparation for publication and is not publicly available yet.
We also thank Erik Koelink for the suggestion to look at the particular case with constant recurrence coefficients, and Alan Sokal for illuminating discussions on branched continued fractions and their connection with multiple orthogonal polynomials.

%\newpage
\bibliographystyle{plain}
\bibliography{MOPS}

\begin{thebibliography}{10}

\bibitem{AptekarevKaliaguine1998}
A.I. Aptekarev and V.A. Kalyagin.
\newblock Complex rational approximation and difference operators.
\newblock {\em {Rend. Circ. Matem. Palermo, Ser. II, Suppl}}, 52:3--21, 1998.

\bibitem{AptKalLagoRochaLimitBehaviour}
A.I. Aptekarev, V.A. Kalyagin, G.~L\'opez~Lagomasino, and I.A. Rocha.
\newblock On the limit behavior of recurrence coefficients for multiple
  orthogonal polynomials.
\newblock {\em J. Approx. Theory}, 13:779--811, 2011.

\bibitem{BenCheikhDouak}
Y.~Ben~Cheikh and K.~Douak.
\newblock On two-orthogonal polynomials related to the {Bateman}'s
  ${J}_n^{u,v}$-function.
\newblock {\em Methods Appl. Anal.}, 7(4):641--662, 2000.

\bibitem{PrudnikovEtAlVol3}
Yu.~A. Brychkov, O.~I. Marichev, and A.~P. Prudnikov.
\newblock {\em {Integrals and Series Volume 3 - More Special Functions}}.
\newblock 1990.

\bibitem{WalterCoussementX2AsymptoticZeroDistribution}
E.~Coussement, J.~Coussement, and W.~Van~Assche.
\newblock Asymptotic zero distribution for a class of multiple orthogonal
  polynomials.
\newblock {\em Trans. Amer. Math. Soc.}, 360(10):5571--5588, 2008.

\bibitem{WalterCoussementSomeClassicalMOPs}
E.~Coussement and W.~Van~Assche.
\newblock Some classical multiple orthogonal polynomials.
\newblock {\em J. Comput. Appl. Math.}, 127:317--347, 2001.

\bibitem{ContinuedFractionsForSpecialFunctions}
A.~Cuyt, W.B. Jones, V.B. Petersen, B.~Verdonk, and H.~Waadeland.
\newblock {\em Handbook of {Continued} {Fractions} for {Special} {Functions}}.
\newblock Springer, New York, 2008.

\bibitem{DLMF}
{\it NIST Digital Library of Mathematical Functions}.
\newblock http://dlmf.nist.gov/, Release 1.0.17 of 2017-12-22.
\newblock F.W.J. Olver, A.B.{Olde Daalhuis}, D.W. Lozier, B.I. Schneider, R.F.
  Boisvert, C.W. Clark, B.R. Miller and B.V. Saunders, eds.

\bibitem{DouakandMaroniClassiquesDeDimensionDeux}
K.~Douak and P.~Maroni.
\newblock Les polyn\^omes orthogonaux classiques de dimension deux.
\newblock {\em Analysis}, 12:71--107, 1992.

\bibitem{DouakMaroni2020}
Khalfa Douak and Pascal Maroni.
\newblock On a new class of 2-orthogonal polynomials, i: the recurrence
  relations and some properties.
\newblock {\em Integral Transforms and Special Functions}, 0(0):1--20, 2020.

\bibitem{DyachenkoKarp}
A.~Dyachenko and D.~Karp.
\newblock {Ratios of the Gauss hypergeometric functions with parameters shifted
  by integers}.
\newblock {\em In preparation for publication}.

\bibitem{BatemanProjectVol1}
A.~Erdelyi, W.~Magnus, F.~Oberhettinger, and F.~G. Tricomi.
\newblock {\em {Higher Transcendental Functions}}, volume~1.
\newblock McGraw-Hill, New York, 1953.

\bibitem{NikishinSystemsArePerfect}
U.~Fidalgo~Prieto and G.~L\'opez~Lagomasino.
\newblock Nikishin systems are perfect.
\newblock {\em Constr. Approx.}, 34:297--356, 2011.

\bibitem{NikishinSystemsArePerfectCaseOfUnboundedAndTouchingSupports}
U.~Fidalgo~Prieto and G.~L\'opez~Lagomasino.
\newblock Nikishin systems are perfect. {Case} of unbounded and touching
  supports.
\newblock {\em J. Approx. Theory}, 13:779--811, 2011.

\bibitem{GantmacherKreinOscillationMatrices}
F.R. Gantmacher and M.G. Krein.
\newblock {\em {Oscillation Matrices and Kernels and Small Vibrations of
  Mechanical Systems (Revised English Edition)}}.
\newblock AMS Chelsea Publishing, Providence, R.I., 2002.

\bibitem{GradshteynRyzhik}
I.S. Gradshteyn and I.~M. Ryzhik.
\newblock {\em {Table of Integrals, Series, and Products}}.
\newblock Academic Press, New York, 7 edition, 2007.

\bibitem{IsmailBook}
M.E.H. Ismail.
\newblock {\em Classical and {Quantum} {Orthogonal} {Polynomials} in {One}
  {Variable}}.
\newblock Cambridge {University} {Press}, 2005.

\bibitem{KuijlaarsStiv14}
A.B.J. Kuijlaars and D.~Stivigny.
\newblock Singular values of products of random matrices and polynomials
  ensembles.
\newblock {\em Random Matrices: Theory Appl.}, 3(3, 1450011):22 pp., 2014.

\bibitem{KuijlaarsZhang14}
A.B.J. Kuijlaars and L.~Zhang.
\newblock Singular values of products of {Ginibre} random matrices, multiple
  orthogonal polynomials and hard edge scaling limits.
\newblock {\em Comm. Math. Phys.}, 2(332):759--781, 2014.

\bibitem{Kustner}
R.~K\"ustner.
\newblock Mapping properties of hypergeometric functions and convolutions of
  starlike or convex functions of order $\alpha$.
\newblock {\em Comput. Methods Funct.Theor}, 2(2):597--610, 2002.

\bibitem{LamiriOuni}
I.~Lamiri and A.~Ouni.
\newblock {$d$-Orthogonality of Humbert and Jacobi type polynomials}.
\newblock {\em J. Math. Anal. Appl.}, 341:24--51, 2008.

\bibitem{PaperTricomiWeights}
H.~Lima and A.~Loureiro.
\newblock Multiple orthogonal polynomials associated with confluent
  hypergeometric functions.
\newblock arXiv:2001.06820, 2020.

\bibitem{GuillermoSurvey}
G.~L\'opez~Lagomasino.
\newblock An introduction to multiple orthogonal polynomials and
  {Hermite}-{Pad\'e} approximation.
\newblock arXiv:1910.08548, 2019.

\bibitem{AnaWalter3FoldSym}
A.~Loureiro and W.~Van~Assche.
\newblock Threefold symmetric {Hahn}-classical multiple orthogonal polynomials.
\newblock {\em Anal. Appl.}, 18(2):271--332, 2020.

\bibitem{MaroniOrthogonalite}
P.~Maroni.
\newblock L'orthogonalit\'e et les r\'ecurrences de polyn\^omes d'ordre
  sup\'erieur \`a deux.
\newblock {\em Ann. Fac. Sci. Toulouse}, 10(1):105--139, 1989.

\bibitem{Minton}
B.M. Minton.
\newblock Generalized hypergeometric functions at unit argument.
\newblock {\em J. Math. Phys.}, 12:1375--1376, 1970.

\bibitem{NikishinSystems}
E.M Nikishin.
\newblock On simultaneous {Pad\'e} approximants.
\newblock {\em Mat. Sb. (N.S.)}, 41:409--425, 1982.

\bibitem{NikishinSorokinBook}
E.M. Nikishin and V.N. Sorokin.
\newblock {\em {Rational Approximations and Orthogonality}}, volume~92 of {\em
  {Translations of Mathematical Monographs}}.
\newblock {Amer. Math. Soc., Providence, R.I.}, 1991.

\bibitem{AlanSokalEtAlBranchedContinuedFractions}
M.~P\'etr\'eolle, A.~Sokal, and B.~Zhu.
\newblock {Lattice paths and branched continued fractions: {An} infinite
  sequence of generalizations of the {Stieltjes-Rogers} and {Thron-Rogers}
  polynomials, with coefficientwise {Hankel}-total positivity}.
\newblock arXiv:1807.03271, 2018.

\bibitem{Pineiro}
L.R. Pi\~neiro.
\newblock On simultaneous approximations for a collection of {Markov}
  functions.
\newblock {\em Vestnik Mosk. Univ., Ser. I}, 2:67--70, 1987.

\bibitem{SaibNewPerspectivesOnD-OPS}
A.~Saib.
\newblock Some new perspectives on d-orthogonal polynomials.
\newblock arXiv:1605.00049v5, 2018.

\bibitem{Stieltjesmemoir}
T.-J. Stieltjes.
\newblock Recherches sur les fractions continues.
\newblock {\em Annales de la Facult\'e des Sciences de Toulouse pour les
  Sciences Math\'ematiques et les Sciences Physiques}, 8(4):1--122, 1894.

\bibitem{WalterNonsymmetric}
W.~Van~Assche.
\newblock Nonsymmetric linear difference equations for multiple orthogonal
  polynomials.
\newblock {\em SIDE II in: CRM Proc. Lecture Notes}, 25:415--429, 2000.

\bibitem{WalterNearestNeighborRecurrenceRelations}
W.~Van~Assche.
\newblock Nearest neighbor recurrence relations for multiple orthogonal
  polynomials.
\newblock {\em J. Approx. Theory}, 163(10):1427--1448, 2011.

\bibitem{WalterMehlerHeineAsymptoticsForMOPs}
W.~Van~Assche.
\newblock {Mehler-Heine asymptotics for multiple orthogonal polynomials}.
\newblock {\em Proc. Amer. Math. Soc.}, 145:303--314, 2017.

\bibitem{SemyonWalter}
W.~Van~Assche and S.~Yakubovich.
\newblock Multiple orthogonal polynomials associated with {Macdonald}
  functions.
\newblock {\em Integral Transforms Spec. Funct.}, 9(3):229--244, 2000.

\bibitem{WallContinuedFractions}
H.S. Wall.
\newblock {\em {Analytic Theory of Continued Fractions}}.
\newblock Chelsea, Bronx, NY, 1973.

\end{thebibliography}

\end{document}